\documentclass{amsart}
\usepackage{amsmath,amsthm,amsfonts,amssymb,amscd,systeme}
\usepackage{tikz,tikz-cd}
\usepackage{empheq}
\usepackage{lipsum}
\usepackage{lastpage}
\usepackage{thmtools}
\usepackage{enumerate}
\usepackage[shortlabels]{enumitem}
\usepackage{fancyhdr}
\usepackage{mathrsfs}
\usepackage{xcolor}
\usepackage{graphicx}
\usepackage{mathtools}
\usepackage{listings}
\usepackage{bbm}

\usepackage{hyperref}
\usepackage[makeroom]{cancel}
\usepackage[utf8]{inputenc}

\DeclareMathAlphabet\mathbfcal{OMS}{cmsy}{b}{n}
\DeclareMathOperator{\Sl}{SL}
\DeclareMathOperator{\Gl}{GL}

\DeclareMathOperator{\im}{Im}
\DeclareMathOperator{\re}{Re}

\DeclareMathOperator{\Tr}{Tr}

\setlist[enumerate]{leftmargin=*,widest=0}

\newcommand{\legendre}[2]{\left(\hspace{-1pt}\frac{#1}{#2}\hspace{-1pt}\right)}

\newtheorem{theorem}{Theorem}[section]
\newtheorem{lemma}[theorem]{Lemma}

\theoremstyle{definition}
\newtheorem{definition}{Definition}[section]
 \newtheorem{proposition}[theorem]{Proposition}
  
\newtheorem{example}{Example}[section]

\newtheorem{conjecture}[theorem]{\bf Conjecture}

\theoremstyle{remark}

\numberwithin{equation}{section}

\begin{document}

\title[Ramanujan tau function]{Representation of Ramanujan's tau function by twisted divisor functions}

%    Information for first author
\author{Tianyu Ni}
%%    Address of record for the research reported here
\address{School of Mathematical and Statistical Sciences,
Clemson University,
Clemson, SC 29634-0975,
USA}

%\email{tianyuni1994math@gmail.com}
%    \thanks will become a 1st page footnote.
%\thanks{The first author was supported in part by NSF Grant \#000000.}

%    Information for second author
%\author{Author Two}
%\address{Mathematical Research Section, School of Mathematical Sciences,
%Australian National University, Canberra ACT 2601, Australia}
%\email{two@maths.univ.edu.au}
%\thanks{Support information for the second author.}

%    General info
\subjclass[2020]{11F11, 11A25}

%\date{January 1, 2001 and, in revised form, June 22, 2001.}

%\dedicatory{This paper is dedicated to our advisors.}

\keywords{}

\begin{abstract}
We present an infinite family of identities that represent Ramanujan's tau function in terms of convolution sums of twisted divisor functions. Our method involves explicitly constructing non-vanishing level $1$ cusp forms from modular forms of higher levels.
\end{abstract}

\maketitle
%\tableofconten

\section{Introduction}
The Ramanujan's tau function $\tau$ is defined by the following expression:
\begin{align*}
    \Delta(z):=q\prod_{n\geq1}(1-q^n)^{24}=\sum_{n\geq1}\tau(n)q^n,
\end{align*}
where $q=e^{2\pi iz}$ with $\im (z)>0$, and the function 
$\Delta (z)$ is a cusp form of weight $12$ and level $1$.
There are various works on finding a formula for $\tau(n)$ which involve convolution sums of the divisor functions, see \cite{Ramanujanid} and the references therein. In this paper, we prove an infinite family of identities that represent $\tau(n)$ in terms of convolution sums of twisted divisor functions that are defined below.  Throughout the paper, let $D\geq1$ denote an odd square-free integer unless specified.  
\begin{definition}\label{def:twsumdiv}Let $l\geq3$ be an integer, and let $\phi$ and $\psi$ be respectively Dirichlet characters 
modulo $D_1$ and $D_2$ such that  $D=D_1D_2$ and
$\phi(-1)\psi(-1)=(-1)^{l}$. We define 
    \begin{align*}
   \sigma_{l-1,\phi,\psi}(n):&=\begin{cases}
        -L(1-l,\phi)L(0,\psi)&n=0,\\
\sum\limits_{\substack{d_1,d_2>0\\d_1d_2=n}}\phi(d_1)\psi(d_2)d_1^{l-1}&n\geq1.
    \end{cases}
    \end{align*}
    \end{definition}
Note that $\sigma_{l-1,\phi,\psi}(n)$ appear as the Fourier coefficients of some Eisenstein series (see e.g., \cite[p.129]{diamondshurman}). 
To state our results concisely, we also  introduce the following notation, which arises from the Fourier coefficients of certain cusp forms of level $1$; see Proposition \ref{prop:fouriercoexplicit}.
\begin{definition}\label{def:fDlke}
  Let $\chi$ be a primitive Dirichlet character modulo $D$, and let $\ell,k$ be integers with the same parity such that $3\leq\ell\leq k$ and $\chi(-1)=(-1)^{\ell}$. For $n\geq1$, we define
    \begin{gather*}
        a_{D,\ell,k,e}(n;\chi):=\sum_{D=D_1D_2}\overline{\chi}_2(-1)D_2^{-e}\hspace{-5pt}\sum_{\substack{a_1,a_2\geq0\\a_1+a_2=nD_2}}\hspace{-5pt}\sigma_{\ell-1,\chi_1,\overline{\chi}_2}(a_1)\sigma_{k-1,\overline{\chi}_1,\chi_2}(a_2) \cdot c_{e,a_1,a_2},\\
        c_{e,a_1,a_2}:=\sum_{r=0}^e(-1)^ra_1^ra_2^{e-r}\binom{e+\ell-1}{e-r}\binom{e+k-1}{r},
    \end{gather*}
where the summation $\sum_{D=D_1D_2}$ is over all factorizations of $D=D_1D_2$ as a product of two positive integers,  and $\chi=\chi_1\chi_2$ is the corresponding factorization into primitive characters modulo $D_1$ and $D_2$. We write $a_{D,k,e}(n)$ for $a_{D,k,k,e}(n,\chi_D)$, where $\chi_D$ denotes the quadratic character modulo $D$.
%We write 
%\begin{align*}
%    \tilde{f}_{D,\ell,k,e}(n;\chi):=\frac{   f_{D,\ell,k,e}(n;\chi)}{   f_{D,\ell,k,e}(1;\chi)}.
%\end{align*}
   \end{definition} 
We can now state the main result of the paper.
    \begin{theorem}\label{thm:mainresultodd}Let  $\chi$ be a primitive Dirichlet character modulo $D$, and let
\begin{align*}
    \tilde{a}_{D,\ell,k,e}(n;\chi):=\frac{   a_{D,\ell,k,e}(n;\chi)}{   a_{D,\ell,k,e}(1;\chi)}.
\end{align*}
    \begin{enumerate}
        \item If $\chi(-1)=-1$, then  
        \begin{align}
                \tau(n)&=\tilde{a}_{D,3,5,2}(n;\chi),\tag{a.1}\\
                \tau(n)&=\tilde{a}_{D,3,7,1}(n;\chi).\tag{a.2}\label{eq:371odd}
        \end{align}
        \item  If $\chi(-1)=1$, then
        \begin{align}
               \tau(n)&=\tilde{a}_{D,4,6,1}(n;\chi).\tag{b}\label{eq:evennoncentral}
        \end{align}
        \item If $D=1$ or a prime congruent to $5\pmod8$, then
        \begin{align}
                \tau(n)&=\tilde{a}_{D,4,2}(n).\tag{c}\label{eq:evencentral}
        \end{align}
    \end{enumerate}
%  \begin{align*}
%\tau(n)&=
%\frac{\sum\limits_{D=D_1D_2}\overline{\chi}_2(-1)\sum\limits_{m=0}^{nD_2}\sigma_{2,\chi_1,\overline{\chi}_2}(m)\sigma_{4,\overline{\chi}_1,\chi_2}(nD_2-m)\left(\frac{45m^2}{D_2^2}-\frac{36mn}{D_2}+6n^2\right)}{\sum\limits_{D=D_1D_2}\overline{\chi}_2(-1)\sum\limits_{m=0}^{D_2}\sigma_{2,\chi_1,\overline{\chi}_2}(m)\sigma_{4,\overline{\chi}_1,\chi_2}(D_2-m)\left(\frac{45m^2}{D_2^2}-\frac{36m}{D_2}+6\right)}
%,\\\tau(n)&=\frac{\sum\limits_{D=D_1D_2}\overline{\chi}_2(-1)\sum\limits_{m=0}^{nD_2}\sigma_{2,\chi_1,\overline{\chi}_2}(m)\sigma_{6,\overline{\chi}_1,\chi_2}(nD_2-m)\left(3n-\frac{10m}{D_2}\right)}{\sum\limits_{D=D_1D_2}\overline{\chi}_2(-1)\sum\limits_{m=0}^{D_2}\sigma_{2,\chi_1,\overline{\chi}_2}(m)\sigma_{6,\overline{\chi}_1,\chi_2}(D_2-m)\left(3-\frac{10m}{D_2}\right)}. 
%  \end{align*}
  \end{theorem}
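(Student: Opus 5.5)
The idea is to realize $a_{D,\ell,k,e}(n;\chi)$ as the $n$-th Fourier coefficient of a level $1$ cusp form of weight $\ell+k+2e$. In every case of the theorem this weight is $12$: $(3,5,2)$, $(3,7,1)$ and $(4,6,1)$ give $12$, and so does $(4,4,2)$ for part (c). Since $S_{12}(\mathrm{SL}_2(\mathbb{Z}))=\mathbb{C}\Delta$, any nonzero form in this space is a scalar multiple of $\Delta$. Normalizing by the first coefficient then gives $\tau(n)=\tilde a(n)$. So the proof reduces to two things: the construction, and showing that $a_{D,\ell,k,e}(1;\chi)\neq0$.

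\textbf{Construction.} Take the Eisenstein series $E_\ell^{\chi_1,\overline\chi_2}$ and $E_k^{\overline\chi_1,\chi_2}$ on $\Gamma_0(D)$, whose coefficients are the twisted divisor sums of Definition~\ref{def:twsumdiv}. Form their $e$-th Rankin--Cohen bracket; the coefficients $c_{e,a_1,a_2}$ are exactly the Rankin--Cohen coefficients. This gives a form of weight $\ell+k+2e$ with trivial character on $\Gamma_0(D)$, because the characters cancel. For $e\ge1$ it is a cusp form.

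Next, take the trace down to level $1$, summing over coset representatives of $\Gamma_0(D)\backslash \mathrm{SL}_2(\mathbb{Z})$. Since $D$ is squarefree, this is done prime by prime, and the Atkin--Lehner operators permute the Eisenstein series by swapping $\chi_1$ and $\chi_2$ factors. The effect is that the trace equals $\sum_{D=D_1D_2}\overline\chi_2(-1)D_2^{-e}\,[\,\cdot\,]|U_{D_2}$ applied to the bracket with the roles rearranged. This explains both the factor $nD_2$ in the convolution and the weighting $D_2^{-e}$. I would cite Proposition~\ref{prop:fouriercoexplicit}, which presumably records exactly this: $\sum_n a_{D,\ell,k,e}(n;\chi)q^n\in S_{\ell+k+2e}(1)$.

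\textbf{Non-vanishing.} This is the main obstacle. The first approach I would try is Rankin's unfolding: the Petersson inner product $\langle \mathrm{Tr}[E_\ell,E_k]_e,\Delta\rangle$ equals $\langle [E_\ell,E_k]_e,\Delta\rangle_{\Gamma_0(D)}$. Unfolding against one Eisenstein series gives a constant times $L(\Delta,s_0)L(\Delta\otimes\chi,s_1)$ at specific integer points. Nonvanishing then holds as follows.
\begin{itemize}
\item In case (a), $(\ell,k)=(3,5)$ or $(3,7)$, both points lie in the region of absolute convergence, or at least off the center. Here $L\ne0$ by the Euler product, or by the Jacquet--Shalika non-vanishing on the line $\mathrm{Re}=1$ after normalization.
\item Case (b), with $\ell<k$, is likewise non-central.
\item In case (c), $\ell=k=4$, $e=2$, one evaluation point is the central point $s=6$ of $L(\Delta\otimes\chi_D,s)$. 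Non-vanishing there is not automatic. For $D=1$ it is the known fact $L(\Delta,6)\neq0$. For $D=p\equiv5\pmod 8$, I would try Waldspurger/Kohnen--Zagier: $L(\Delta\otimes\chi_D,6)$ is proportional to $c(D)^2$, where $c$ are the coefficients of the weight $13/2$ Kohnen plus-form. One then needs $c(p)\ne0$, perhaps through a congruence modulo $2$ for $p\equiv5\pmod8$. This arithmetic input is where I expect the real difficulty.
\end{itemize}
A cruder fallback is to compute $a(1;\chi)$ directly and show it is nonzero. For instance, the $D_2=1$ term involves $L(1-\ell,\chi)$-type constants, which are nonzero. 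For general $\chi$, however, the sum over $D_2$ makes this hard, so the $L$-function route is preferred.
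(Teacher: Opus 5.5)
Your route matches the paper's, and parts (a) and (b) go through. The coefficients $a_{D,\ell,k,e}(n;\chi)$ are those of $\mathcal{F}_{D,\ell,k,e}=\Tr^D_1[G_{\ell,\chi},G_{k,\overline{\chi}}]_e\in S_{12}$, and non-vanishing comes from Rankin--Selberg. In (a) and (b) the relevant product is $L(\Delta,k+\ell+e-1)\,L(\Delta,\overline{\chi},k+e)$, evaluated at $(9,7)$, $(10,8)$ and $(10,7)$. All of these points lie beyond the abscissa $13/2$ of absolute convergence, so the Euler product gives non-vanishing; your phrase ``or at least off the center'' would not suffice by itself. One clarification: what you trace and unfold is the single bracket $[G_{\ell,\chi},G_{k,\overline{\chi}}]_e$, where $G_{k,\overline{\chi}}$ is a nonzero multiple of $E^*_{k,D}(\cdot;\chi)$. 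The sum over $D=D_1D_2$ only appears after tracing, via the coset representatives and Lemma~\ref{lem:FourierexpansionofGkD}; you do not bracket $E_\ell^{\chi_1,\overline{\chi}_2}$ and $E_k^{\overline{\chi}_1,\chi_2}$ first.

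The genuine gap is part (c) for $D=p\equiv5\pmod{8}$. You correctly reduce it to the central value $L(\Delta,\chi_p,6)\neq0$. You then only sketch a hoped-for argument ($c(p)\neq0$ for the weight $13/2$ form, ``perhaps through a congruence modulo $2$''), so (c) is not proved as written. The missing input is exactly Kohnen--Zagier's Corollary~2 (Proposition~\ref{prop:kzcentralLspecialcase}). It states that for $k\equiv2\pmod{4}$ and $D$ a prime $\equiv5\pmod{8}$, some eigenform $f\in S_{2k}$ has $L(f,k)L(f,\chi_D,k)\neq0$. Taking $2k=12$, the fact that $\dim S_{12}=1$ forces $f=\Delta$.

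Your uniform ``unfolding'' step is also not justified in case (c). Lemma~\ref{lem:RankinSelbergZagier} needs $k_2\geq k_1+2$, and for $\ell=k=4$ the series $\sum_n\tau(n)\sigma_{3,\chi_p}(n)n^{-9}$ is not absolutely convergent, since its terms are only $O(n^{-1/2+\varepsilon})$. The identity $\langle\Delta,\mathcal{F}_{p,4,2}\rangle_1=c\,L(\Delta,9)L(\Delta,\chi_p,6)$ therefore requires Shimura's method with analytic continuation, which the paper imports as Proposition~\ref{prop:rscentrallvalue}. Its constant contains $L(-3,\chi_p)$, which is nonzero because $\chi_p$ is even.
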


Before going on to sketch the idea of the proof, we  look at several examples. 

\begin{example}
First, we consider the case $D=1$, where $\sigma_{l-1,\phi,\psi}(n)$ becomes the divisor function $\sigma_{l-1}(n)=\sum_{d\mid n}d^{l-1}$ for $n\geq1$ and $\sigma_{l-1}(0)=\zeta(1-l)/2$. Let $\mathbf{1}$ denote the trivial character. Note that 
  \begin{align*}
      a_{1,4,6,1}(n;\mathbf{1})
%&=\sum_{\substack{a_1,a_2\geq0\\a_1+a_2=n}}\sigma_{3}(a_1)\sigma_{5}(a_2)\sum_{r=0}^1(-1)^ra_1^ra_2^{1-r}\binom{4}{1-r}\binom{6}{r}\\
&=\sum_{ m=0 }^n\sigma_{3}(m)\sigma_{5}(n-m)\sum_{0\leq r\leq1}(-1)^rm^r(n-m)^{1-r}\binom{4}{1-r}\binom{6}{r}
\\&=\frac{\zeta(-3)}{2}\sigma_5(n)4n-\frac{\zeta(-5)}{2}\sigma_3(n)6n+\sum\limits_{m=1}^{n-1}(4n-10m)\sigma_3(m)\sigma_5(n-m)\\&=\frac{1}{60}\sigma_5(n)n+\frac{1}{84}\sigma_{3}(n)n+\sum_{m=1}^{n-1}(4n-10m)\sigma_{3}(m)\sigma_{5}(n-m).
  \end{align*}
%  where $\sigma_3(0)=\frac{\zeta(-3)}{2}=\frac{1}{240}$ and $\sigma_5(0)=\frac{\zeta(-5)}{2}=-\frac{1}{504}$. 
Now, \eqref{eq:evennoncentral} becomes
\begin{align}
    \tau(n)&
    %=\frac{   f_{1,4,6,1}(n;\mathbf{1})}{f_{1,4,6,1}(1;\mathbf{1})}
    =\frac{5}{12}n\sigma_3(n)+\frac{7}{12}n\sigma_5(n)+70\sum_{m=1}^{n-1}(2n-5m)\sigma_{3}(m)\sigma_{5}(n-m),\tag{i}\label{eq:4,6,1level1}
\end{align}
which also appeared in \cite[(ix)]{Ramanujanid}. We can also check that
\begin{align*}
    a_{1,4,2}(n)
    %&=\sum_{m=0}^{n}\sigma_{3}(m)\sigma_{3}(n-m)\sum_{r=0}^2(-1)^rm^r(n-m)^{2-r}\binom{5}{2-r}\binom{5}{r}\\&
    =\frac{1}{12}n^2\sigma_{3}(n)+5\sum\limits^{n-1}_{m=1}(2n-3m)(n-3m)\sigma_{3}(m)\sigma_{3}(n-m).
\end{align*}
Hence \eqref{eq:evencentral} becomes
\begin{align}
    \tau(n)
    %=\frac{f_{1,4,2,2}(n,\mathbf{1})}{f_{1,4,2,2}(1,\mathbf{1})}
    =n^2\sigma_3(n)+60\sum\limits^{n-1}_{m=1}(2n-3m)(n-3m)\sigma_{3}(m)\sigma_{3}(n-m),\tag{ii}\label{eq:442level1}
\end{align}
as given in \cite[p.344]{transRC1966} and \cite[p. 428]{Lietheory}. Identities \eqref{eq:4,6,1level1} and \eqref{eq:442level1} are examples of the van der Pol type identities for Ramanujan’s tau function \cite{vanderpol1951}, which can be proved by considering Maass differential operators \cite{Maass} or Rankin-Cohen brackets \cite{Cohen'smodularformC_k} on Eisenstein series of level 1, see \cite{Lanphiertauid} and \cite{Ramanujanid} for details. 
\end{example}
\begin{example}

We then examine the cases when $D>1$, which are a generalization of the aforementioned classical identities from the perspective of levels rather than weights of modular forms.  Our formulas become complicated, as there will be more convolution sums involved. For example, we look at \eqref{eq:371odd} for $D=p$. Note that
\begin{align*}
a_{p,3,7,1}(n,\chi)
=&\sum_{p=D_1D_2}\overline{\chi}_2(-1)\sum_{m=0}^{nD_2}(3n-\frac{10m}{D_2})\sigma_{2,\chi_1,\overline{\chi}_2}(m)\sigma_{6,\overline{\chi}_1,\chi_2}(nD_2-m)\\
%In particular, if $D=p$ is a prime, then $ a_{p,3,7,1}(n,\chi)=$
%\begin{align*}
=&-\sum_{m=0}^{np}(3n-\frac{10m}{p})\sigma_{2,\mathbf{1},\overline{\chi}}(m)\sigma_{6, \mathbf{1},\chi}(np-m)\\&+\sum_{m=0}^n(3n-10m)\sigma_{2,\chi,\mathbf{1}}(m)\sigma_{6,\overline{\chi},\mathbf{1}}(n-m).
\end{align*}
Here, $\sigma_{l-1,\phi,\psi}(0)=0$ unless $D_2=1$, in which case $\sigma_{l-1,\phi,\mathbf{1}}(0)=L(1-l,\phi)/2$ and we write $\sigma_{l-1,\phi}(n)$ for $\sigma_{l-1,\phi,\mathbf{1}}(n)$.  Note also that for $n\geq1$ we have $$\sigma_{l-1,\mathbf{1},\psi}(n)=\sum_{d\mid n}\psi(n/d)d^{l-1}=n^{l-1}\sigma_{1-l,\psi}(n),$$
where $\sigma_{1-l,\psi}(n):=\sum_{m\mid n}\psi(m)m^{1-l}$. For $n>1$  we have
\begin{align*}
  a_{p,3,7,1}(n,\chi)= &-\sum_{m=1}^{np-1}m^2(3n-\frac{10m}{p})(np-m)^6\sigma_{-2,\overline{\chi}}(m)\sigma_{-6,\chi}(np-m)\\&+\frac{L(-2,\chi)}{2}\sigma_{6,\overline{\chi}}(n)3n-\frac{L(-6,\overline{\chi})}{2}\sigma_{2,\chi}(n)7n\\&+\sum_{m=1}^{n-1}(3n-10m)\sigma_{2,\chi}(m)\sigma_{6,\overline{\chi}}(n-m),\end{align*}
 and for $n=1$ we have   
\begin{align}a_{p,3,7,1}(1,\chi)=&-\sum_{m=1}^{p-1}m^2(3-\frac{10m}{p})(p-m)^6\sigma_{-2,\overline{\chi}}(m)\sigma_{-6,\chi}(p-m)\nonumber\\&+\frac{3}{2}{L(-2,\chi)}-\frac{7}{2}L(-6,\overline{\chi}). \label{eq:1stcop371}
\end{align}
Note that, compared to the case $D=1$, our expression contains one additional term of the convolution sum of twisted divisor functions, which is related to another parameter $p$. We now provide some numerical examples for $p=7$; see \cite{codefortwisted}. Let $w=e^{\pi i/3}$. There are three odd primitive Dirichlet characters modulo $7$:
\begin{center}
    \begin{tabular}{c|r r c c c c}
        $n$ & -1 & 1 & 2 & 3 & 4 & 5 \\
        \hline
$\chi_7$   & -1 & $1$ & $1$ & $-1$ & $1$ & $-1$ \\
        
      $\varphi$   & -1 & $1$ & $w^2$ & $w$ & $-w$ & $-w^2$ \\
      $\overline{\varphi}$   & -1 & $1$ & $-w$ & $-w^2$ & $w^2$ & $w$ 
    \end{tabular}
\end{center}
The first few values of $a_{7,3,7,1}(n,\chi)$ for $\chi=\chi_7, \varphi$ and $\overline{\varphi}$
%$a_{7,3,7,1}(n,\chi_7)$, $a_{7,3,7,1}(n,\varphi)$ and $a_{7,3,7,1}(n,\overline{\varphi})$ 
are given in the following tables, where one can easily verify that $\tau(n)=\tilde{a}_{7,3,7,1}(n;\chi)$.
\begin{center}
\begin{tabular}{ r  r }

 $n$ & $\tau(n)$  \\ 
 \hline
 $1$ & $1$  \\  
 $2$ & $-24$\\
 $3$ & $252$\\
 $4$ & $-1472$\\
 $5$ & $4830$\\
 $6$ & $-6048$\\
 $7$ & $-16744$ \\
 $8$ & $84480$\\
 $9$ & $-113643$\\
 $10$ & $-115920$
\end{tabular}
\quad
\begin{tabular}{ r  r }
$n$ & $a_{7,3,7,1}(n,\chi_7)$  \\ 
 \hline
  $1$ &   $\frac{66816}{7}$  \\
 $2$ & $\frac{-1603584}{7}$\\  
 $3$ &  $2405376$\\
 $4$ & $ \frac{-98353152}{7}$\\
 $5$ & $46103040$\\
 $6$ & $  -57729024$\\
 $7$ & $ -159823872$\\
 $8$ & $\frac{5644615680}{7}$ \\
 $9$ & $ \frac{-7593170688}{7}$\\
 $10$ & $-1106472960$\\
\end{tabular}
\\
\begin{tabular}{ r  r }
$n$ & $a_{7,3,7,1}(n,\varphi)$  \\ 
 \hline
  $1$ &      $\frac{-93600}{7}w + \frac{76896}{7}$\\
 $2$ & $\frac{2246400}{7}w - \frac{1845504}{7}$\\ 
 $3$ &  $-3369600 w + 2768256$\\
 $4$ & $\frac{137779200}{7}w - \frac{113190912}{7}$\\
 $5$ & $-64584000w + 53058240$\\
 $6$ & $80870400w - 66438144$\\
 $7$ & $223891200w - 183935232$\\
 $8$ &  $\frac{-7907328000}{7}w + \frac{6496174080}{7}$\\
 $9$ & $\frac{10636984800}{7}w - \frac{8738692128}{7}$\\
 $10$ & $1550016000w - 1273397760$\\
\end{tabular}\quad\begin{tabular}{ r  r }
$n$ & $a_{7,3,7,1}(n,\overline{\varphi})$  \\ 
 \hline
  $1$ & $\frac{93600}{7}w - \frac{16704}{7}$     \\
 $2$ &  $\frac{-2246400}{7}w + \frac{400896}{7}$\\ 
 $3$ & $3369600w - 601344$ \\
 $4$ & $\frac{-137779200}{7}w + \frac{24588288}{7}$\\
 $5$ & $64584000w - 11525760$\\
 $6$ & $ -80870400w + 14432256$\\
 $7$ & $-223891200w + 39955968$\\
 $8$ & $\frac{7907328000}{7}w - \frac{1411153920}{7}$\\
 $9$ & $\frac{-10636984800}{7}w + \frac{1898292672}{7}$\\
 $10$ & $-1550016000w + 276618240$\\
\end{tabular}
\end{center}
\end{example}
\begin{example}
    It is perhaps worth mentioning one application of Theorem \ref{thm:mainresultodd} to evaluate the value of the Dirichlet $L$-function.  Recall the following classical result (see e.g., \cite[Theorem 4.2]{cyclotomicWashington}). Let $\chi$ be a Dirichlet character $\chi$ modulo $q$, and let $m$ be a positive integer. Then 
    \begin{align*}
        L(1-m,\chi)=-\frac{q^{m-1}}{m}\sum_{a=1}^{q}\chi(a)B_{m}(\frac{a}{q}),
    \end{align*}
    where $B_m(x)$ is the Bernoulli polynomial defined by
    \begin{align*}
        \frac{te^{xt}}{e^t-1}=\sum_{n=0}^{\infty}B_n(x)\frac{t^n}{n!}.
    \end{align*}For a prime $p\equiv5\pmod8$ and $\chi_p(n):=
    \legendre{n}{p},$ we use Theorem \ref{thm:mainresultodd} \eqref{eq:evencentral} to give an alternative formula for $L(-3,\chi_p)$, which does not involve Bernoulli polynomials. One can easily verify that
        \begin{align*}
     a_{p,4,2}(n)
     = &\sum_{p=D_1D_2}\sum_{m=0}^{nD_2}(\frac{45 m^2}{D_2^2}-\frac{45mn}{D_2}+10n^2)\sigma_{3,\chi_1,\chi_2}(m)\sigma_{3,\chi_1,\chi_2}(nD_2-m)
     \\=&\sum_{m=1}^{np-1}(\frac{45 m^2}{p^2}-\frac{45mn}{p}+10n^2)\sigma_{3,\mathbf{1},\chi_p}(m)\sigma_{3,\mathbf{1},\chi_p}(np-m)\\&+\sum_{m=1}^{n-1}(45 m^2-45mn+10n^2)\sigma_{3,\chi_p}(m)\sigma_{3,\chi_p}(n-m)\\&+2\cdot\frac{L(-3,\chi_p)}{2}\cdot 10n^2\sigma_{3,\chi_p}(n).
    \end{align*} 
In particular, we have
\begin{align*}
      a_{p,4,2}(1)= &\sum_{m=1}^{p-1}(\frac{45 m^2}{p^2}-\frac{45m}{p}+10)\sigma_{3,\mathbf{1},\chi_p}(m)\sigma_{3,\mathbf{1},\chi_p}(p-m)+10L(-3,\chi_p) .\end{align*}  
Note that $\sigma_{3,\chi_p}(2)=1+2^{3}\legendre{2}{p}=-7$ and  $a_{p,4,2}(2)=$
    \begin{align*}     
  &-5-280L(-3,\chi_p)+\sum_{m=1}^{2p-1}(\frac{45 m^2}{p^2}-\frac{90m}{p}+40)\sigma_{3,\mathbf{1},\chi_p}(m)\sigma_{3,\mathbf{1},\chi_p}(2p-m).
\end{align*}
%Note again that $\sigma_{3,\mathbf{1},\chi_p}(m)=m^3\sigma_{-3,\chi_p}(m)$. 
Since $\tau(2)=-24$, \eqref{eq:evencentral} gives 
$$a_{p,4,2}(2)=-24a_{p,4,2}(1),$$ from which we deduce that
\begin{align*}
    L(-3,\chi_p)=&-\frac{1}{8}+\frac{1}{40}\sum_{m=1}^{2p-1}(\frac{45m^2}{p^2}-\frac{90m}{p}+40)\sigma_{3,\mathbf{1},\chi_p}(m)\sigma_{3,\mathbf{1},\chi_p}(2p-m)\\&+\frac{3}{5}\sum_{m=1}^{p-1}(\frac{45 m^2}{p^2}-\frac{45m}{p}+10)\sigma_{3,\mathbf{1},\chi_p}(m)\sigma_{3,\mathbf{1},\chi_p}(p-m).
\end{align*}
\end{example}
We now sketch the idea of the proof. Our method involves constructing a family of cusp forms of level $1$ from forms of higher levels. To do so, we take the Rankin-Cohen brackets of Eisenstein series of higher levels and then take the trace of them, as seen in Section \ref{sect;construction}. However, unlike the case of level $1$, where one can easily show that the construction is non-vanishing by checking that at least one Fourier coefficient is non-zero, our construction involves an additional parameter $D$, for which we are unable to show that it is non-vanishing directly. In Section \ref{sect:nonvofconstruction}, we show that our construction is non-vanishing by the Rankin-Selberg method. In Section \ref{sect:Fourier}, we prove our result by explicitly computing the Fourier coefficients. Finally, we conclude the paper by proposing two conjectures.

\section{Constructing level one cusp forms from higher level forms}\label{sect;construction}
In this section, we construct a family of cusp forms of level 1 from modular forms of higher levels. Note that this type of construction has already appeared in \cite{Kohnen-Zagier1981}, which was generalized in \cite{Kayath_Lane_Neifeld_Ni_Xue_2025} and \cite{twistedNagoya}. 
For $N\geq1$ and a Dirichlet character $\chi$ modulo $N$, let $M_k(N,\chi)$ and $S_k(N,\chi)$ be the space of modular forms and cusp forms of weight $k$, level $N$ and nebentypus $\chi$, respectively. We write $M_k(N)$ and $S_k(N)$ if $\chi$ is trivial, and use $M_k$ and $S_k$ to denote $M_k(1)$ and $S_k(1)$ when $N=1$. 

The Rankin–Cohen bracket of two modular forms \cite{Rankin1956,Cohen'smodularformC_k}, generalizing the product of two modular forms, produces a cusp form. 

\begin{definition}\label{def:rankincohen}
Let $f(z)\in M_{a}(N,\nu_1)$ and $g(z)\in M_{b}(N,\nu_2)$ be modular forms of level $N$, weights $a$ and $b$ and nebentypus $\nu_1$ and $\nu_2$, respectively. For an integer $e\geq1$, the $e$-th Rankin-Cohen bracket is given by
\begin{align}
    [f(z),g(z)]_e := \sum_{r=0}^e (-1)^r\binom{e+a-1}{e-r}\binom{e+b-1}{r}f(z)^{(r)}g(z)^{(e-r)},\label{eq:defofrankin-cohenbracket}
\end{align}
where $f(z)^{(r)}$ is the $r$-th normalized derivative $f(z)^{(r)}:=\frac{1}{(2\pi i)^r}\frac{d^r f(z)}{dz^r}$ of $f$.  Moreover, $[f,g]_e\in S_{a+b+2e}(N,\nu_1\nu_2)$; see \cite[Theorem 7.1]{Cohen'smodularformC_k}. 
Note that the definition in Zagier \cite[(73)]{Zagier1976} is related to \eqref{eq:defofrankin-cohenbracket} through $F_{e}^{(a,b)}(f(z),g(z))= (-2\pi i)^e e![f(z),g(z)]_e$.
\end{definition}
We introduce a type of Eisenstein series; see also \cite[pp. 185]{Kohnen-Zagier1981} and \cite[pp. 269-270]{MiyakeMFbook}.
Assume $k\geq3$. Let $D\geq1$ be an odd square-free integer and $\chi$ be a primitive character mod $D$ such that $\chi(-1)=(-1)^{k}$. Define
\begin{align*}
    G_{k,\chi}(z):&=\sum_{n=0}^{\infty}\sigma_{k-1,\chi}(n)q^n\in M_k(D,\chi),\\
    \sigma_{k-1,\chi}(n):&=\begin{cases}
        \frac{L(1-k,\chi)}{2}&n=0,\\
        \sum\limits_{d\mid n}\chi(d)d^{k-1}&n\geq1.   
    \end{cases}
\end{align*}
Here, $L(s,\chi)=\sum_{n\geq1}\chi(n)n^{-s}$ ($\re(s)\gg0$) is the Dirichlet series.

Now, we construct a family of cusp forms of level one by taking the trace of Rankin-Cohen brackets of the aforementioned Eisenstein series. 
\begin{definition}\label{def:construction}Let $\chi$ be a primitive character mod $D$, and let $\chi_D$ denote the quadratic character mod $D$. For $e\geq1$ and integers $\ell,k$ with the same parity, such that $3\leq\ell\leq k$ and $\chi(-1)=(-1)^{\ell}$, we define  
\begin{align}
\mathcal{F}_{D,\ell,k,e}(z;\chi):=\begin{cases}\Tr^D_1[G_{\ell,\chi}(z),G_{k,\overline{\chi}}(z)]_e &\ell<k,\\\Tr^D_1[G_{k,\chi_D}(z),G_{k,\chi_D}(z)]_e &\ell=k,\end{cases}\label{eq:defoffancyG}
\end{align}
where $\Tr_1^D$  is the trace map 
%(see e.g. \cite[p.271]{GZ-86}) 
\begin{align*} 
    \Tr_1^D: M_{m}(D)\rightarrow M_m(1),\quad g\mapsto \sum_{\gamma\in\Gamma_0(D)\backslash\Gamma_0(1)}g|_m\gamma,
\end{align*}
and for $m\in\mathbb{R}$ and $\gamma=\begin{bsmallmatrix}
    a&b\\c&d
\end{bsmallmatrix}\in \Gl_2^+(\mathbb{R})$  the slash operator \cite[Theorem 7.1]{Cohen'smodularformC_k} is
\begin{align} 
g(z)|_m\gamma=\det(\gamma)^{m/2} (cz+d)^{-m} g\left(\frac{az+b}{cz+d}\right).\label{eq:slash}
\end{align}
We write $\mathcal{F}_{D,k,e}(z)$ for $\mathcal{F}_{D,k,k,e}(z;\chi_D)$.
%; see also \cite[(1.11)]{Kayath_Lane_Neifeld_Ni_Xue_2025}.
\end{definition}

%Note that $\mathcal{F}_{D,\ell,k,e}(z;\chi)$ is a cusp form of weight $k+\ell+2e$ and level one.
\section{Non-vanishing of the construction}\label{sect:nonvofconstruction}
In this section, we show that $\mathcal{F}_{D,\ell,k,e}$ is not identically zero in several cases. The idea is to use the Rankin-Selberg method (see e.g., \cite{Rankin-Selbergsurvey} for an exposition) to transform the problem into the non-vanishing of twisted $L$-values. 
For  $f,g\in M_k(N,\chi)$ such that $fg$ is a cusp form, the Petersson inner product is given by 
\begin{align*}
    \langle f,g\rangle_{N}=\int_{\Gamma_{0}(N)\backslash\mathbb{H}}f(z)\overline{g(z)}y^k \frac{dxdy}{y^2}.
\end{align*}
We write $\Gamma_{\infty}:=\{\pm\begin{psmallmatrix}
    1& n\\ 0&1
\end{psmallmatrix}:n\in\mathbb{Z}\}$. Let $E_{k,N}^{\ast}(z;\chi)$ be the Eisenstein series for the cusp at infinity of level $N$ \cite[p.~272]{MiyakeMFbook}:
\begin{align*}
    E_{k,N}^{\ast}(z;\chi):=\sum_{\begin{psmallmatrix}a&b\\c&d\end{psmallmatrix}\in\Gamma_{\infty}\backslash\Gamma_0(N)}\frac{\chi(d)}{(cz+d)^k}.
\end{align*}

We first consider the case $\ell<k$.
Recall the classical result on the Rankin-Selberg method below, which was reformulated and generalized in Zagier \cite{Zagier1976}, keeping in mind the difference between \eqref{eq:defofrankin-cohenbracket} and the one used therein.
\begin{lemma}[{\cite[Proposition 6]{Zagier1976}}]\label{lem:RankinSelbergZagier}
Let $k_1$ and $k_2$ be real numbers with $k_2\geq k_1+2>2$. Let $f(z)=\sum_{n\geq1}a(n)q^n$ and $g(z)=\sum_{n\ge0}b(n)q^n$ be modular forms in $S_k(N,\chi)$ and $M_{k_1}(N,\chi_1)$, where $k=k_1+k_2+2e, e\geq0$ and $\chi=\chi_1\chi_2$. 
Then 
    \begin{align}
        \langle f,[g,E^*_{k_2,N}(\cdot;\chi_2)]_e\rangle_{N}=\frac{(-1)^e}{e!}\frac{\Gamma(k-1)\Gamma(k_2+e)}{(4\pi)^{k-1}\Gamma(k_2)}\sum_{n=1}^{\infty}\frac{a(n)\overline{b(n)}}{n^{k_1+k_2+e-1}}.\label{eq:RankinSelberg}
    \end{align}
\end{lemma}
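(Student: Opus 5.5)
The plan is the standard unfolding argument. It has three steps: rewrite the Rankin--Cohen bracket against the Eisenstein series as a Poincar\'e-type series built from the single function $g^{(e)}$; unfold the Petersson integral against $\Gamma_\infty\backslash\Gamma_0(N)$; and evaluate the resulting integral over the strip termwise.

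\emph{Step 1: the bracket as a Poincar\'e-type series.} I use the equivariance of the Rankin--Cohen bracket under the slash operator \eqref{eq:slash} (Cohen's theorem cited in Definition \ref{def:rankincohen}):
\[
[g|_{k_1}\gamma,\,h|_{k_2}\gamma]_e=[g,h]_e|_{k_1+k_2+2e}\gamma .
\]
Write $E^*_{k_2,N}(z;\chi_2)=\sum_{\gamma\in\Gamma_\infty\backslash\Gamma_0(N)}\chi_2(d)\,1|_{k_2}\gamma$. For $\gamma\in\Gamma_0(N)$ we have $g=\overline{\chi_1(d)}\,g|_{k_1}\gamma$, up to the paper's normalisation of nebentypus. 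Bilinearity then gives
\[
[g,E^*_{k_2,N}]_e=\sum_{\gamma}\chi_2(d)\,\overline{\chi_1(d)}\,[g,1]_e|_k\gamma .
\]
In $[g,1]_e$ only the $r=e$ term of \eqref{eq:defofrankin-cohenbracket} survives, because the derivatives of the constant $1$ vanish. Hence
\[
[g,1]_e=(-1)^e\binom{e+k_2-1}{e}g^{(e)}=(-1)^e\frac{\Gamma(k_2+e)}{e!\,\Gamma(k_2)}\,g^{(e)} .
\]
Interchanging bracket and sum is justified by absolute and locally uniform convergence of the Eisenstein series and its derivatives, which needs $k_2>2$. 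I also need the characters to combine so that the total twist matches the nebentypus $\chi$ of $f$; this is where $\chi=\chi_1\chi_2$ and the parity conditions enter. I expect checking these character and sign conventions (including $\pm1\in\Gamma_\infty$) to be fiddly but routine.

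\emph{Step 2: unfolding.} Insert this expression into $\langle f,\cdot\rangle_N$. The character factors cancel against $f|_k\gamma=\chi(d)f$, so each term becomes the integrand $f\,\overline{g^{(e)}}\,y^k$ transported by $\gamma$. The standard unfolding then gives
\[
\langle f,[g,E^*]_e\rangle_N=(-1)^e\frac{\Gamma(k_2+e)}{e!\,\Gamma(k_2)}\int_0^\infty\!\!\int_0^1 f(z)\overline{g^{(e)}(z)}\,y^{k-2}\,dx\,dy .
\]
The main obstacle is justifying the unfolding, namely absolute convergence of
\[
\sum_\gamma\int_{\Gamma_0(N)\backslash\mathbb H}\bigl|f\,\overline{g^{(e)}|\gamma}\bigr|\,y^k\,d\mu .
\]
I would handle it through the standard growth estimates. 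The function $|f|y^{k/2}$ is bounded because $f$ is cuspidal. The function $g^{(e)}$ grows at most polynomially in $y$ near each cusp and is dominated by $y^{-k_1/2-e}$-type terms. So the sum is majorised by the weight-$k_2$ real-analytic Eisenstein sum $\sum_\gamma \operatorname{Im}(\gamma z)^{\,s}$ with $s$ comfortably greater than $1$. The hypothesis $k_2\ge k_1+2$ is exactly what makes this exponent large enough; I would first try the crude bound $|g(z)|\ll 1+y^{-k_1}$ and see that it suffices.

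\emph{Step 3: termwise integration.} Expand $f=\sum_{n\ge1}a(n)q^n$ and $g^{(e)}=\sum_m m^e b(m)q^m$. The $x$-integral picks out the diagonal terms $m=n$, and for each $n$
\[
\int_0^\infty e^{-4\pi ny}y^{k-2}\,dy=\Gamma(k-1)(4\pi n)^{1-k} .
\]
Multiplying by $n^e$ gives the power $n^{e-(k-1)}=n^{-(k_1+k_2+e-1)}$. Combining this with the constant from Step 1 yields \eqref{eq:RankinSelberg}. The interchange of summation and integration is justified by the exponential decay of $f$ and the polynomial growth of the coefficients $b(m)$.
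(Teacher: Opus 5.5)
Your skeleton is the standard one. It is Zagier's proof, which the paper cites rather than reproduces, and the formal part is right: equivariance of the bracket, $[g,1]_e=(-1)^e\binom{e+k_2-1}{e}g^{(e)}$, unfolding to the strip, and $\int_0^\infty e^{-4\pi ny}y^{k-2}\,dy=\Gamma(k-1)(4\pi n)^{1-k}$ give exactly the stated constant. The gaps are in the two analytic justifications, and you have put the hypothesis $k_2\ge k_1+2$ in the wrong step.

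In Step 3, the interchange of $\int_0^\infty dy$ with $\sum_n$ is a problem at $y\to0$, not at $y\to\infty$, so the exponential decay of $f$ does not help. The natural justification (Fubini--Tonelli) requires absolute convergence of $\sum_n|a(n)b(n)|\,n^{-(k_1+k_2+e-1)}$, and ``polynomial growth of $b(m)$'' does not decide that. This is where $k_2\ge k_1+2$ is actually used, and it needs sharp enough coefficient bounds. With Hecke's bound $a(n)\ll n^{k/2}$ and $b(n)\ll n^{k_1-1}$ the terms are only $O(n^{(k_1-k_2)/2})$, i.e.\ $O(n^{-1})$ in the boundary case $k_2=k_1+2$. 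That boundary case is exactly the one the paper uses for $(\ell,k)=(3,5),(4,6)$ in Proposition~\ref{prop:k=12l<k}. You need Rankin's mean value $\sum_{n\le X}|a(n)|^2\ll X^{k}$ (or Deligne's bound) together with $b(n)\ll n^{k_1-1}$ (fine for integral $k_1\ge3$). Cauchy--Schwarz and partial summation then give absolute convergence as soon as $k_2>k_1+1$.

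In Step 2, the bound you state, $|g^{(e)}(z)|\ll y^{-k_1/2-e}$, is false pointwise: $|g(iy)|\asymp y^{-k_1}$ as $y\to0$ whenever $g$ does not vanish at the cusp $0$. Your fallback $|g(z)|\ll 1+y^{-k_1}$, combined with $|f|\ll y^{-k/2}$, gives the exponent $s=(k_2-k_1)/2$. This equals $1$ in the boundary case, where $\sum_\gamma\operatorname{Im}(\gamma z)^{s}$ diverges. Moreover, a pointwise majorant of the form $\sum_\gamma\operatorname{Im}(\gamma z)^{s}$ is never integrable over $\Gamma_0(N)\backslash\mathbb{H}$, since it grows like $y^{s}$ at the cusps. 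You have to use the decay of $f$ at the cusps, not just its boundedness.

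The fix is to bound the product jointly. The function $|f(z)|y^{k/2}\cdot|g^{(e)}(z)|y^{k_1/2+e}$ is bounded on $\mathbb{H}$: the first factor decays exponentially at every cusp, while the second grows at most polynomially (use Cauchy's estimate on the disc $|w-z|<y/2$ for the derivatives). Hence $|f\,\overline{g^{(e)}}|\,y^{k-2}\ll y^{k_2/2-2}$ on the strip for $y\le1$. This is integrable because $k_2>2$, so the unfolding needs only $k_2>2$, not $k_2\ge k_1+2$.

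Finally, with $E^*_{k_2,N}$ built from $\chi_2(d)$, the character factors in Step 2 cancel only if $\chi=\chi_1\overline{\chi}_2$; taking $\chi=\chi_1\chi_2$ literally leaves a factor $\overline{\chi}_2(d)^2$. The first convention is the one under which the paper applies the lemma in Proposition~\ref{prop:rsrc}, where $f$ has trivial character and $\chi_1=\chi_2=\chi$.
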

For a normalized Hecke eigenform $f(z)=\sum_{n\geq1}a_f(n)q^n$ in $S_K$, its twisted Hecke $L$-series is given by $$L(f,\chi,s):=\sum_{n\geq1}a_f(n)\chi(n)n^{-s},$$  which is holomorphic in the reigion $\re(s)>1+\frac{K-1}{2}$ (due to Deligne's bound). A standard computation gives the following result; see also \cite[(72)]{Zagier1976}.
\begin{lemma}[{\cite[Lemma 2.5]{twistedNagoya}}]\label{lem:easycomp}
  Let $f(z)=\sum_{n\geq1}a_{f}(n)q^n\in S_K$ be a normalized Hecke eigenform. Then for a Dirichlet character $\chi$, an integer $\ell\geq3$ and a complex number $s$ with $\re(s)>\ell+\frac{K-1}{2}$, we have 
  \begin{align*} 
    \sum_{n=1}^{\infty}\frac{a_f(n)\sigma_{\ell-1,\chi}(n)}{n^{s}}=\frac{L(f,s)L(f,\chi,s-\ell+1)}{L(2s-\ell+2-K,\chi)}.
  \end{align*}
\end{lemma}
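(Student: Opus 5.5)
Because $f$ is a normalized Hecke eigenform of level $1$, its coefficients are multiplicative and satisfy the Hecke recursion at prime powers. Hence the Dirichlet series factors as an Euler product
\begin{align*}
\sum_{n\geq1}\frac{a_f(n)\sigma_{\ell-1,\chi}(n)}{n^s}=\prod_p \sum_{m\geq0}\frac{a_f(p^m)\sigma_{\ell-1,\chi}(p^m)}{p^{ms}},
\end{align*}
so it suffices to verify the identity one prime at a time. Absolute convergence for $\re(s)>\ell+\frac{K-1}{2}$ follows from Deligne's bound $|a_f(n)|\le d(n)n^{(K-1)/2}$ together with $|\sigma_{\ell-1,\chi}(n)|\le d(n)n^{\ell-1}$. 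This justifies the Euler product and also shows that each of $L(f,s)$, $L(f,\chi,s-\ell+1)$ and $L(2s-\ell+2-K,\chi)^{-1}$ converges absolutely in this region. Indeed, $\re(2s-\ell+2-K)>\ell+1>1$, and $\re(s-\ell+1)>1+\frac{K-1}{2}$.

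At a fixed prime $p$, write $\alpha+\beta=a_f(p)$ and $\alpha\beta=p^{K-1}$, and set $\gamma=\chi(p)p^{\ell-1}$, so that $\sigma_{\ell-1,\chi}(p^m)=\frac{\gamma^{m+1}-1}{\gamma-1}$ (or $m+1$ if $\gamma=1$). Also $a_f(p^m)=\frac{\alpha^{m+1}-\beta^{m+1}}{\alpha-\beta}$. The local factor is then $\sum_m a_f(p^m)\sigma_{\ell-1,\chi}(p^m)X^m$ with $X=p^{-s}$. This is the classical Rankin computation for the product of two ``symmetric-square type'' sequences: the generating function of $\frac{(x_1^{m+1}-x_2^{m+1})(y_1^{m+1}-y_2^{m+1})}{(x_1-x_2)(y_1-y_2)}$ equals
\begin{align*}
\frac{1-x_1x_2y_1y_2X^2}{\prod_{i,j}(1-x_iy_jX)}.
\end{align*}
Apply this with $(x_1,x_2)=(\alpha,\beta)$ and $(y_1,y_2)=(\gamma,1)$. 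The denominator becomes
\begin{align*}
(1-\alpha X)(1-\beta X)\,(1-\alpha\gamma X)(1-\beta\gamma X),
\end{align*}
which is exactly the inverse of the local factors of $L(f,s)$ and $L(f,\chi,s-\ell+1)$. Here one uses $\gamma X=\chi(p)p^{-(s-\ell+1)}$. The numerator is $1-p^{K-1}\chi(p)p^{\ell-1}p^{-2s}=1-\chi(p)p^{-(2s-\ell+2-K)}$, which is the inverse of the local factor of $L(2s-\ell+2-K,\chi)$.

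The case $\chi(p)=0$ (that is, $p\mid$ the modulus) is consistent: then $\gamma=0$ and the local factor reduces to $L_p(f,s)$, while the other two factors are trivial. The degenerate case $\gamma=1$ cannot occur for $\ell\ge3$, and $\alpha=\beta$ is handled by continuity. The only real work is the generating-function identity. I would prove it by expanding the product $(x_1^{m+1}-x_2^{m+1})(y_1^{m+1}-y_2^{m+1})$ into four geometric series, summing each, and putting everything over the common denominator; this is routine algebra, and it is the only step requiring care. Multiplying the local identities over all $p$ then gives the lemma.
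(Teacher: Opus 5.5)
Your proof is correct and is essentially the argument the paper has in mind. The paper gives no proof of its own and only cites the identity as a standard computation (\cite[Lemma 2.5]{twistedNagoya}, \cite[(72)]{Zagier1976}); that computation is your Euler-product comparison of local factors via $\alpha,\beta$ and the generating-function identity with $(y_1,y_2)=(\chi(p)p^{\ell-1},1)$, which does check out on combining the four geometric series. Your convergence bookkeeping is right (Deligne's bound gives absolute convergence exactly for $\re(s)>\ell+\frac{K-1}{2}$), and so is your treatment of primes with $\chi(p)=0$.
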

The following proposition is likely well-known to experts, but we include a proof here due to the lack of an exact reference.
\begin{proposition}\label{prop:rsrc}
Suppose $\ell<k$. If $f\in S_{k+\ell+2e}$ is a normalized Hecke eigenform, then 
    \begin{align*}
        \langle f, \mathcal{F}_{D,\ell,k,e}\rangle_1=&
        %\frac{(-1)^e}{e!}\frac{\Gamma(k+\ell+2e-1)\Gamma(k+e)D^kL(k,\chi)}{(4\pi)^{k+\ell+2e-1}(-2\pi i)^k\overline{\tau(\chi)}L(k,\overline{\chi})}\\&\quad\quad\quad\times 
        \tilde{c}\cdot L(f,k+\ell+e-1)L(f,\overline{\chi},k+e),\nonumber
    \end{align*}
    where $\tilde{c}$ is some non-zero constant depending on $k,\ell,e$ and $\chi$.
\end{proposition}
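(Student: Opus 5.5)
The plan is to unfold the trace, move everything to the cusp at infinity with the Fricke involution, and then apply Lemma \ref{lem:RankinSelbergZagier} followed by Lemma \ref{lem:easycomp}. Put $K=k+\ell+2e$.

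\textbf{Step 1 (unfolding the trace).} Since $f$ has level $1$, it is invariant under $|_K\gamma$ for every $\gamma\in\Gamma_0(1)$. The integrand $f\,\overline{g}\,y^K$ transforms compatibly, so the standard unfolding gives
\[
\langle f,\Tr^D_1 g\rangle_1=\langle f,g\rangle_D,
\]
where $g=[G_{\ell,\chi},G_{k,\overline{\chi}}]_e\in S_K(D)$. This reduces the problem to a level $D$ Petersson product.

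\textbf{Step 2 (Fricke involution).} Let $W_D=\begin{bsmallmatrix}0&-1\\D&0\end{bsmallmatrix}$. It normalizes $\Gamma_0(D)$, so the Petersson product is invariant under it:
\[
\langle f,g\rangle_D=\langle f|_KW_D,\;g|_KW_D\rangle_D .
\]
Moreover $f|_KW_D=D^{K/2}f(Dz)$ up to a sign. By Cohen's theorem the Rankin--Cohen bracket commutes with the weight-compatible slash action, so
\[
g|_KW_D=[G_{\ell,\chi}|_\ell W_D,\;G_{k,\overline{\chi}}|_kW_D]_e .
\]

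Because $\chi$ is primitive and $D$ is square-free, $G_{k,\overline{\chi}}$ is the Eisenstein series supported at the cusp $0$. The key input is therefore the classical identity (Miyake, \S7.1)
\[
G_{k,\overline{\chi}}|_kW_D=c_1\,E^*_{k,D}(z;\chi'),
\]
with an explicit nonzero constant $c_1$. This constant involves a Gauss sum, $L(k,\cdot)$ and powers of $2\pi i$ and $D$, and $\chi'$ is $\chi$ or $\overline{\chi}$ as dictated by the nebentypus. Similarly, $G_{\ell,\chi}|_\ell W_D=c_2\,H(z)$, where $H$ has Fourier coefficients
\[
b(n)=\sum_{d\mid n}\chi'(n/d)\,d^{\ell-1},
\]
and its constant term vanishes since $\chi$ is nontrivial (for $D>1$; the case $D=1$ is classical). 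The characters of the two factors multiply to the trivial character, which matches $f(Dz)\in S_K(D)$.

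\textbf{Step 3 (Rankin--Selberg).} Now apply Lemma \ref{lem:RankinSelbergZagier} with $k_1=\ell$, $k_2=k$ and the cusp form $f(Dz)$, whose coefficients are $a(n/D)$. The hypothesis $k\ge \ell+2$ holds because $\ell<k$ and the two weights have the same parity. The resulting Dirichlet series is
\[
\sum_m \frac{a_f(m)\,\overline{b(Dm)}}{(Dm)^{s}},\qquad s=k+\ell+e-1 .
\]
Since $\chi'(Dm/d)=0$ unless $D\mid d$, one gets
\[
b(Dm)=D^{\ell-1}\sum_{d\mid m}\chi'(m/d)\,d^{\ell-1}.
\]
A variant of Lemma \ref{lem:easycomp}, with the character placed on the complementary divisor, is proved by the same Euler-product computation. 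It turns this series into
\[
\frac{L(f,s)\,L(f,\overline{\chi},s-\ell+1)}{L(2s-\ell+2-K,\overline{\chi})}
\]
up to powers of $D$, possibly after swapping $\chi$ and $\overline\chi$, which is absorbed into the conventions.

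At $s=k+\ell+e-1$ the numerator is $L(f,k+\ell+e-1)\,L(f,\overline{\chi},k+e)$, as claimed. The denominator argument equals $k$, and $L(k,\overline\chi)\neq0$ because $k\ge3$ lies in the region of absolute convergence of the Euler product. Collecting the Gamma factors, $c_1$, $c_2$ and the powers of $D$ gives a nonzero constant $\tilde c$.

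\textbf{Main obstacle.} I expect the hardest part to be Step 2. It requires the precise $W_D$-transformation of $G_{k,\overline\chi}$ and $G_{\ell,\chi}$, in particular checking that for square-free $D$ and primitive $\chi$ no contributions from other cusps appear, and tracking which of $\chi$ and $\overline{\chi}$ lands in each factor. I would first settle this by directly computing $G_{k,\chi}$ as a Hecke-type lattice sum $\sum\chi(c)(cz+d)^{-k}$ and applying $W_D$ to it.
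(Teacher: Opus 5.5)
\textbf{There is a genuine gap in Step 2.} You identify $G_{k,\overline\chi}$ as the Eisenstein series attached to the cusp $0$, and that is wrong.

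\begin{itemize}
\item Its constant term at $\infty$ is $L(1-k,\overline\chi)/2$. This is nonzero because $\overline\chi(-1)=(-1)^k$.
\item More precisely, computing the Fourier expansion of $E^*_{k,D}(z;\chi)$ with the Lipschitz formula shows that $G_{k,\overline\chi}$ is itself a nonzero multiple of $E^*_{k,D}(z;\chi)$. This is the relation $G_{k,\overline{\chi}}=\frac{(k-1)!D^kL(k,\chi)}{(-2\pi i)^k\tau(\chi)}E^*_{k,D}(z;\chi)$ that the paper uses.
\item $G_{\ell,\chi}$ and $G_{k,\overline\chi}$ are series of exactly the same type, so they must transform the same way under $W_D$. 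You correctly found that $G_{\ell,\chi}|_\ell W_D$ has coefficients $\sum_{d\mid n}\chi'(n/d)d^{\ell-1}$ and vanishes at $\infty$. The same happens to $G_{k,\overline\chi}|_k W_D$: up to a constant it has coefficients $\sum_{d\mid n}\chi(n/d)d^{k-1}$ and zero constant term.
\end{itemize}

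After the Fricke involution, neither factor of the bracket is the Eisenstein series at $\infty$, so Lemma \ref{lem:RankinSelbergZagier} cannot be applied. Unfolding would need the Eisenstein series at $0$, which amounts to undoing $W_D$. The Fricke step does not just add work; it destroys the structure you need.

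\textbf{Step 3 has a second, independent error.} Putting the character on the complementary divisor moves the twist to the other $L$-factor. The same Euler-product computation gives $\sum_{m\ge1}a_f(m)\sum_{d\mid m}\psi(m/d)d^{\ell-1}m^{-s}=L(f,\psi,s)L(f,s-\ell+1)/L(2s-\ell+2-K,\psi)$. At $s=k+\ell+e-1$ this yields $L(f,\psi,k+\ell+e-1)\,L(f,k+e)$, not the product in the statement. That difference cannot be absorbed into conventions.

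\textbf{The repair is to delete Step 2 entirely.} After Step 1, apply Lemma \ref{lem:RankinSelbergZagier} directly to $\langle f,[G_{\ell,\chi},G_{k,\overline\chi}]_e\rangle_D$, with $g=G_{\ell,\chi}$, $k_1=\ell$, $k_2=k$, and $G_{k,\overline\chi}=c\,E^*_{k,D}(\cdot;\chi)$. Then $\overline{b(n)}=\sigma_{\ell-1,\overline\chi}(n)$. Now apply Lemma \ref{lem:easycomp} with $\overline\chi$ and $s=k+\ell+e-1$; this $s$ lies in the lemma's range because $k\ge\ell+2$. The result is exactly $L(f,k+\ell+e-1)L(f,\overline\chi,k+e)/L(k,\overline\chi)$, which is the paper's proof.
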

\begin{proof}
    Since $\langle f,g\rangle_{N}=\langle f,\Tr^D_1 g\rangle_1$ for $f\in S_K$ and $g\in M_K(D)$ (see e.g., \cite[p.271]{GZ-86}), we have
    \begin{align}
        \langle f,\mathcal{F}_{D,k,\ell,e}\rangle_1=\langle f,[G_{\ell,\chi}(z),G_{k,\overline{\chi}}(z)]_e\rangle_D.\label{eq:inprodtr}
    \end{align}
    Note that (see e.g., \cite[Lemma 2.2]{twistedNagoya}):
\begin{align}
    G_{k,\overline{\chi}}(z)=\frac{(k-1)!D^kL(k,\chi)}{(-2\pi i)^k\tau(\chi)}E_{k,D}^{\ast}(z;\chi),\label{eq:relationbetweentwoeisenstein}
\end{align}
where $\tau(\chi)$ is the Gauss sum.  By Lemma \ref{lem:RankinSelbergZagier}, \eqref{eq:inprodtr} and \eqref{eq:relationbetweentwoeisenstein}, we get
\begin{align}
    \langle f,\mathcal{F}_{D,\ell,k,e}\rangle_1=
    %\frac{(-1)^e\Gamma(k+\ell+2e-1)\Gamma(k+e)D^kL(k,\chi)}{e!(4\pi)^{k+\ell+2e-1}(-2\pi i)^k\overline{\tau(\chi)}}
    c\cdot\sum_{n=1}^{\infty}\frac{a_f(n)\sigma_{\ell-1,\overline{\chi}}(n)}{n^{k+\ell+e-1}}.\label{eq:midstep}
\end{align}
Here, $c$ is some non-zero constant depending only on $\ell,k,e$ and $\chi$. Note that for $K=k+\ell+2e$ and $s=k+\ell+e-1$, we have 
$\re(s)-(\ell+\frac{K-1}{2})=\frac{k-\ell-1}{2}>0$ since $k-\ell\geq2$. Hence, from Lemma \ref{lem:easycomp} we deduce that
\begin{align}
\sum_{n=1}^{\infty}\frac{a_f(n)\sigma_{\ell-1,\overline{\chi}}(n)}{n^{k+\ell+e-1}}=\frac{L(f,k+\ell+e-1)L(f,\overline{\chi},k+e)}{L(k,\overline{\chi})}.\label{eq:twcvsum}
\end{align}
Now, the result follows from \eqref{eq:midstep} and  \eqref{eq:twcvsum}. 
\end{proof}

\begin{proposition}\label{prop:nonval<k}
Suppose $\ell<k$. If $\dim S_{k+\ell+2e}\geq1$ then $\mathcal{F}_{D,\ell,k,e}$ is not identically  zero.
\end{proposition}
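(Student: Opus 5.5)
By Proposition \ref{prop:rsrc}, it is enough to find one normalized Hecke eigenform $f\in S_{K}$, with $K:=k+\ell+2e$, for which
\[
L(f,k+\ell+e-1)\,L(f,\overline{\chi},k+e)\neq 0 .
\]
Indeed, $S_K$ has a basis of normalized Hecke eigenforms, and $\dim S_K\geq1$ guarantees such an $f$ exists. If $\mathcal{F}_{D,\ell,k,e}$ were identically zero, then $\langle f,\mathcal{F}_{D,\ell,k,e}\rangle_1=0$ for every eigenform $f$. Since $\tilde c\neq0$, this would force the product of $L$-values above to vanish for every $f$. So I will show that both $L$-values are nonzero for \emph{every} normalized eigenform $f\in S_K$, by proving that both arguments lie in the region of absolute convergence of an Euler product.

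\textbf{Locating the arguments.} By Deligne's bound $|a_f(p)|\leq 2p^{(K-1)/2}$, both $L(f,s)$ and $L(f,\overline{\chi},s)$ have absolutely convergent Euler products
\[
\prod_p\bigl(1-\alpha_p\overline{\chi}(p)p^{-s}\bigr)^{-1}\bigl(1-\beta_p\overline{\chi}(p)p^{-s}\bigr)^{-1},\qquad |\alpha_p|=|\beta_p|=p^{(K-1)/2},
\]
in the region $\re(s)>\frac{K+1}{2}$. In that region the products converge absolutely and no factor vanishes, so the $L$-values are nonzero there.
\begin{itemize}
\item First argument: $k+\ell+e-1-\frac{K+1}{2}=\frac{k+\ell-3}{2}>0$, since $\ell\geq3$ and $k>\ell$.
\item Second argument: $k+e-\frac{K+1}{2}=\frac{k-\ell-1}{2}>0$, since $k$ and $\ell$ have the same parity, so $k-\ell\geq2$.
\end{itemize}
This is the same computation already carried out in the proof of Proposition \ref{prop:rsrc}. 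Thus both $L$-values are nonzero, so $\langle f,\mathcal{F}_{D,\ell,k,e}\rangle_1\neq0$ and $\mathcal{F}_{D,\ell,k,e}\not\equiv0$.

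\textbf{Main point of care.} The only delicate step is justifying that the Euler products converge absolutely, and are therefore nonvanishing, strictly inside the half-plane $\re(s)>\frac{K+1}{2}$. The strict inequality $k-\ell\geq2$ is exactly what keeps the twisted value $L(f,\overline{\chi},k+e)$ off the boundary. One should also check that the cuspidality hypothesis behind Lemma \ref{lem:RankinSelbergZagier} holds, and that the Petersson pairing on $S_K$ is nondegenerate, so that orthogonality to all eigenforms forces $\mathcal{F}_{D,\ell,k,e}=0$. Both are standard: $\mathcal{F}_{D,\ell,k,e}$ is a cusp form of level one by Definition \ref{def:rankincohen} and the trace construction.
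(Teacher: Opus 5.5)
Your proposal is correct and follows the paper's proof. You assume $\mathcal{F}_{D,\ell,k,e}\equiv0$ and pick a normalized eigenform $f\in S_{k+\ell+2e}$. Proposition \ref{prop:rsrc}, together with the fact that $k+\ell+e-1$ and $k+e$ lie in the half-plane where the Euler products converge absolutely and are nonvanishing, then forces $\langle f,\mathcal{F}_{D,\ell,k,e}\rangle_1\neq0$, a contradiction. Your explicit margins $\frac{k+\ell-3}{2}$ and $\frac{k-\ell-1}{2}$ simply make precise what the paper states in one line. Your remark about nondegeneracy of the Petersson pairing is unnecessary: only the trivial implication $\mathcal{F}_{D,\ell,k,e}\equiv0\Rightarrow\langle f,\mathcal{F}_{D,\ell,k,e}\rangle_1=0$ is used.
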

\begin{proof}
    Suppose $\mathcal{F}_{D,\ell,k,e}\equiv0$. Then $\langle f,\mathcal{F}_{D,\ell,k,e}\rangle_1=0$ for all normalized Hecke eigenforms $f$ in $S_{k+\ell+2e}$. 
     On the other hand, by Proposition \ref{prop:rsrc} and the fact that $k+\ell+e-1$ and $k+e$ are respectively within the regions of absolute convergence of $L(f,s)$ and $L(f,\overline{\chi},s)$, we get $\langle f,\mathcal{F}_{D,\ell,k,e}\rangle_1\neq0$, leading to a contradiction.
\end{proof}
For $n\geq1$, let $a_{D,\ell,k,e}(n;\chi)$ denote the $q^n$-coefficient of $\mathcal{F}_{D,\ell,k,e}(z;\chi)$, and let 
\begin{align}
    \tilde{\mathcal{F}}_{D,\ell,k,e}(z;\chi):=\frac{\mathcal{F}_{D,\ell,k,e}(z;\chi)}{a_{D,\ell,k,e}(1;\chi)},
\end{align}
where we understand that $a_{D,\ell,k,e}(1,\chi)$ has to be nonzero to validate the definition of $\tilde{\mathcal{F}}_{D,\ell,k,e}(z;\chi)$. See Conjecture \ref{conj:nonvans=ishingof1stco} for the non-vanishing of $a_{D,\ell,k,e}(1,\chi)$.
\begin{proposition}\label{prop:k=12l<k}
    Let $\chi$ be a primitive Dirichlet character mod $D$. Then
    \begin{align*}
        \Delta(z)=\begin{cases}
            \tilde{\mathcal{F}}_{D,3,5,2}(z;\chi)=\tilde{\mathcal{F}}_{D,3,7,1}(z;\chi)&\chi(-1)=-1,\\\tilde{\mathcal{F}}_{D,4,6,1}(z;\chi)&\chi(-1)=1.
        \end{cases}
    \end{align*}
\end{proposition}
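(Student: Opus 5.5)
In each case the weight $k+\ell+2e$ equals $12$: $3+5+4=12$, $3+7+2=12$ and $4+6+2=12$. The plan is to use that $S_{12}$ is one-dimensional, spanned by $\Delta$, together with the non-vanishing result of Proposition \ref{prop:nonval<k}.

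\textbf{Step 1 (hypotheses).} We check that Definition \ref{def:construction} applies with $\ell<k$ of the same parity.
\begin{itemize}
\item For $\chi(-1)=-1$ we need $\ell$ odd. The pairs $(\ell,k)=(3,5)$ and $(3,7)$ satisfy $3\le\ell<k$, $\ell\equiv k \pmod 2$ and $\chi(-1)=(-1)^{\ell}$.
\item For $\chi(-1)=1$ the pair $(4,6)$ likewise satisfies these conditions.
\end{itemize}
By Cohen's theorem (Definition \ref{def:rankincohen}), the bracket $[G_{\ell,\chi},G_{k,\overline{\chi}}]_e$ is a cusp form in $S_{12}(D)$ with trivial nebentypus, since $\chi\overline{\chi}$ is trivial. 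Its trace $\mathcal{F}_{D,\ell,k,e}(z;\chi)$ therefore lies in $S_{12}$. Here one should note that the trace of a cusp form is again cuspidal, because slashing by $\gamma\in\Gamma_0(1)$ preserves cuspidality.

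\textbf{Step 2 (proportionality to $\Delta$).} Since $\dim S_{12}=1$ with basis $\Delta$, we get $\mathcal{F}_{D,\ell,k,e}(z;\chi)=c\,\Delta(z)$ for some constant $c$. Proposition \ref{prop:nonval<k} applies because $\dim S_{12}\ge 1$ and $\ell<k$, so $\mathcal{F}_{D,\ell,k,e}\not\equiv 0$ and hence $c\ne 0$.

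\textbf{Step 3 (normalisation).} Comparing $q^1$-coefficients gives $a_{D,\ell,k,e}(1;\chi)=c\,\tau(1)=c\neq0$. So the normalisation $\tilde{\mathcal{F}}$ is well defined, and dividing yields $\tilde{\mathcal{F}}_{D,\ell,k,e}(z;\chi)=\Delta(z)$ in each of the three cases.

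The only nontrivial input is the non-vanishing. It has already been established via the Rankin–Selberg computation of Proposition \ref{prop:rsrc}, where the $L$-values lie in the region of absolute convergence and so are nonzero. The main point to watch is that this step uses $k-\ell\ge 2$, which holds in all three cases, so no real obstacle remains.
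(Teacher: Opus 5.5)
Your proposal is correct and follows the paper's proof. In all three cases the weight is $12$, $S_{12}$ is spanned by $\Delta$, and Proposition~\ref{prop:nonval<k} gives $\mathcal{F}_{D,\ell,k,e}=c\,\Delta$ with $c\neq0$. Your Step~3, showing $a_{D,\ell,k,e}(1;\chi)=c\,\tau(1)=c\neq0$ so that the normalisation $\tilde{\mathcal{F}}$ is legitimate, makes explicit a point the paper leaves implicit.
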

\begin{proof}
   It is immediate from Proposition \ref{prop:nonval<k} and the fact that $\Delta(z)$ is the unique normalized Hecke eigenform in $S_{12}$.
\end{proof}
We now consider the case $\ell=k$, where we can not use Lemma \ref{lem:RankinSelbergZagier} to compute  $\langle f,\mathcal{F}_{D,k,e}\rangle_1$. To tackle this difficulty, we can follow Shimura \cite{Shimura1976} and Lanphier \cite{ShimuraoperatorLanphier}; see \cite[pp. 21-23]{Kayath_Lane_Neifeld_Ni_Xue_2025} for complete details. Here we only state the result. 
\begin{proposition}[{\cite[Proposition 5.6]{Kayath_Lane_Neifeld_Ni_Xue_2025}}]\label{prop:rscentrallvalue}
Suppose $e\equiv0\pmod2$, $k\ge4$ and $\chi_D(-1)=(-1)^k$. If $f\in S_{2k+2e}$ is a normalized Hecke eigenform, then
\begin{align*}
    \langle f,\mathcal{F}_{D,k,e}\rangle_1=\frac{\Gamma(2k+2e-1)\Gamma(k+e)L(1-k,\chi_D)}{2e!(4\pi)^{2k+2e-1}\Gamma(k)L(k,\chi_D)}L(f,2k+e-1)L(f,\chi_D,k+e).
\end{align*}
\end{proposition}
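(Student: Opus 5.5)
The plan follows Shimura's holomorphic projection method for the Rankin--Selberg convolution of an eigenform with a \emph{product} of two Eisenstein series, as in \cite{Shimura1976} and \cite{ShimuraoperatorLanphier}.

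First I reduce to level $D$. Exactly as in \eqref{eq:inprodtr}, we have $\langle f,\mathcal{F}_{D,k,e}\rangle_1=\langle f,[G_{k,\chi_D},G_{k,\chi_D}]_e\rangle_D$. By \eqref{eq:relationbetweentwoeisenstein}, one factor $G_{k,\chi_D}$ can be replaced by a constant multiple of $E^*_{k,D}(z;\chi_D)$. Since $\chi_D$ is real, the constant involves $L(k,\chi_D)$, the Gauss sum and $(k-1)!D^k/(-2\pi i)^k$.

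Lemma \ref{lem:RankinSelbergZagier} cannot be applied as it stands, because it requires $k_2\ge k_1+2$. To get around this, I would replace $E^*_{k,D}$ by the nonholomorphic Eisenstein series $E^*_{k,D}(z,s;\chi_D)=\sum \chi_D(d)(cz+d)^{-k}|cz+d|^{-2s}$ and the bracket by its nonholomorphic analogue, obtained by applying Shimura's raising operators $\delta_k^{(r)}$ instead of plain derivatives. Equivalently, I would write $[g,E]_e$ as a combination of $\delta^{(r)}g\cdot\delta^{(e-r)}E$, whose holomorphic projection agrees with the bracket. Unfolding against $\Gamma_\infty\backslash\Gamma_0(D)$ is then valid for $\re(s)\gg0$. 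This gives a constant times
\[
\int_0^\infty\!\!\int_0^1 f\,\overline{g^{(e)}\text{-type terms}}\; y^{2k+2e+s-2}\,dx\,dy
=\Gamma\text{-factor}(s)\sum_{n\ge1}\frac{a_f(n)\sigma_{k-1,\chi_D}(n)}{n^{2k+e-1+s}},
\]
with $g=G_{k,\chi_D}$. The derivative terms collapse into a single $n^{e}$ power times a rational function of $s$. This follows from the standard binomial identity used in Zagier's proof of \cite[Proposition 6]{Zagier1976}.

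Next I apply Lemma \ref{lem:easycomp} with $\ell=k$ and $K=2k+2e$. This gives the Dirichlet series as
\[
\frac{L(f,2k+e-1+s)\,L(f,\chi_D,k+e+s)}{L(2k+2e+2s-K,\chi_D)}=\frac{L(f,2k+e-1+s)L(f,\chi_D,k+e+s)}{L(2s+2e,\chi_D)} .
\]
Both sides continue meromorphically in $s$. The left side is the Petersson product with a nonholomorphic Eisenstein series, which continues via its functional equation.

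The key step is evaluating at $s=0$. There the point $k+e$ is the center of the twisted $L$-function, and $L(f,\chi_D,k+e)$ is not in the range of absolute convergence, so analytic continuation is genuinely needed. The hypothesis $e$ even enters through two checks. First, the Gamma factors and the factor $1/L(2s+2e,\chi_D)$ must combine with the functional equation of the Eisenstein series. Second, one must show that $E^*_{k,D}(z,s)$ at $s=0$ has no extra nonholomorphic term. For $k\ge4$, the series $E^*_{k,D}(z,s)$ is holomorphic at $s=0$ and equals $E^*_{k,D}(z)$, so no correction arises. The remaining task is to track the constants. The $L(1-k,\chi_D)$ in the final formula arises from the constant term $G_{k,\chi_D}$ contributes, or equivalently from converting $L(k,\chi_D)$ via the functional equation. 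The factor $1/2$ comes from $\pm$ in $\Gamma_\infty$.

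I expect the main obstacle to be this continuation-and-constant bookkeeping. In particular, one must verify that the holomorphic projection of the nonholomorphic bracket at $s=0$ is exactly $[G,G]_e$ with no extra term; for $k\ge4$ there is none. I would rely on \cite[pp.~21--23]{Kayath_Lane_Neifeld_Ni_Xue_2025} for the explicit normalization.
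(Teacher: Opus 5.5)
Your overall route (pass to level $D$, trade one copy of $G_{k,\chi_D}$ for a multiple of $E^*_{k,D}(\cdot;\chi_D)$, insert $|cz+d|^{-2s}$ to get around the hypothesis $k_2\ge k_1+2$ of Lemma \ref{lem:RankinSelbergZagier}, unfold, and let $s\to0$) is the Shimura--Lanphier argument the paper defers to \cite{Kayath_Lane_Neifeld_Ni_Xue_2025}. The trouble is in the step you call the crux: it is miscomputed. Lemma \ref{lem:easycomp} with $\ell=k$, $K=2k+2e$ and exponent $2k+e-1+s$ has denominator $L(2(2k+e-1+s)-k+2-(2k+2e),\chi_D)=L(k+2s,\chi_D)$, not $L(2s+2e,\chi_D)$. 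As written, your computation would end with $1/L(2e,\chi_D)$ instead of the $1/L(k,\chi_D)$ in the statement. The repair you propose, combining Gamma factors with the functional equation of the Eisenstein series with the parity of $e$ entering there, does not occur in this argument: no functional equation is needed at all.

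With the correct denominator the argument closes directly. For $k\ge3$ the series $\sum\chi_D(d)(cz+d)^{-k}|cz+d|^{-2s}$ converges absolutely for $\re(s)>1-k/2$. So the Petersson product of $f$ with your deformed bracket is holomorphic on a half-plane containing both $s=0$ and the region $\re(s)\gg0$ where unfolding is justified. At $s=0$ it reduces to $\langle f,[G_{k,\chi_D},E^*_{k,D}(\cdot;\chi_D)]_e\rangle_D$. The right-hand side continues because $L(f,\cdot)$ and $L(f,\chi_D,\cdot)$ are entire and $L(k+2s,\chi_D)\neq0$ near $s=0$. Hence Lemma \ref{lem:RankinSelbergZagier} holds verbatim for $k_1=k_2=k$.

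For the constant, compare constant terms in \eqref{eq:relationbetweentwoeisenstein}: $G_{k,\chi_D}=\tfrac12L(1-k,\chi_D)\,E^*_{k,D}(\cdot;\chi_D)$, with a real constant. The $\tfrac12$ is just the normalization of the constant term of $G_{k,\chi_D}$. This gives
\[
\langle f,\mathcal{F}_{D,k,e}\rangle_1=\frac{L(1-k,\chi_D)}{2}\cdot\frac{(-1)^e}{e!}\,\frac{\Gamma(2k+2e-1)\Gamma(k+e)}{(4\pi)^{2k+2e-1}\Gamma(k)}\cdot\frac{L(f,2k+e-1)\,L(f,\chi_D,k+e)}{L(k,\chi_D)},
\]
which is the statement once $(-1)^e=1$. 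The parity of $e$ plays no role in the continuation; it only removes this sign. For odd $e$ the bracket $[G_{k,\chi_D},G_{k,\chi_D}]_e$ vanishes identically anyway, since $[g,h]_e=(-1)^e[h,g]_e$ when the weights agree.
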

Now, the non-vanishing of $\mathcal{F}_{D,k,e}$ comes from the following result on the non-vanishing of the twisted central $L$-value due to Kohnen-Zagier \cite{Kohnen-Zagier1981}.
\begin{proposition}[{\cite[Corollary 2]{Kohnen-Zagier1981}}]\label{prop:kzcentralLspecialcase}
    For $k\equiv2\pmod4$ and $D$ a prime congruent to $5\pmod8$, there is at least one Hecke eigenform $f\in S_{2k}$ for which both $L(f,k)$ and $L(f,\chi_D,k)$ are different from zero.
\end{proposition}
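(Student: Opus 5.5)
The plan is to reduce the simultaneous non-vanishing to the non-vanishing of a single explicit Fourier coefficient. That coefficient should be one of a level-one cusp form which spectrally expands as a weighted sum of the products $L(f,k)L(f,\chi_D,k)$ over the eigenforms $f\in S_{2k}$. This is the route of Kohnen--Zagier \cite{Kohnen-Zagier1981}.

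\textbf{Step 1 (why $k$ even and $D\equiv 1\pmod 4$).} Since $k\equiv 2\pmod 4$, $k$ is even, and since $D\equiv 5\pmod 8$ is a prime with $D\equiv1\pmod4$, $D$ is a positive fundamental discriminant. Hence $(-1)^kD>0$, and $\chi_D$ is even, in line with the hypothesis $\chi_D(-1)=(-1)^k$ of Proposition \ref{prop:rscentrallvalue}. These are exactly the discriminants for which the Kohnen--Zagier--Waldspurger formula applies. For $f\in S_{2k}$ corresponding under the Shimura correspondence to $g=\sum c(n)q^n$ in Kohnen's plus space $S^+_{k+1/2}(4)$, that formula gives
\[
\frac{c(D)^2}{\langle g,g\rangle}=\frac{(k-1)!}{\pi^k}D^{k-1/2}\frac{L(f,\chi_D,k)}{\langle f,f\rangle},
\]
and the case $D=1$ gives the same statement for $c(1)^2$ and $L(f,k)$. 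So it suffices to find one eigenform $g$ with $c(1)c(D)\neq0$.

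\textbf{Step 2 (kernel identity).} Next, I would form the weighted sum over a Hecke eigenbasis,
\[
\sum_{f}\frac{L(f,k)\,L(f,\chi_D,k)}{\langle f,f\rangle}\,a_f(1),
\]
which up to a non-zero factor equals $\sum_g c_g(1)\overline{c_g(D)}/\langle g,g\rangle$. This sum is, up to a non-zero constant, a Fourier coefficient of an explicitly computable form. One option is the holomorphic projection of the product of the weight $1/2$ theta function with a weight $k$ Eisenstein series, as in \cite{Kohnen-Zagier1981}. Equivalently, one can pair with the form $\mathcal{F}_{D,k,0}$-type trace construction of Section \ref{sect;construction}, using Proposition \ref{prop:rscentrallvalue} formally with $e=0$. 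If this sum is non-zero, then some $f$ has $L(f,k)L(f,\chi_D,k)\neq0$, which is the claim.

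\textbf{Step 3 (main obstacle).} Computing the relevant coefficient gives a finite arithmetic expression. It has a main term proportional to $L(1-k,\chi_D)\,\zeta(1-k)$-type constants, plus a sum over $b$ with $b^2<D$ and $b\equiv D\pmod 2$ of Cohen-type numbers $H(k,(D-b^2)/4)$ weighted by Gegenbauer-polynomial factors. The hard part is proving that this expression is non-zero, and no size estimate is available uniformly in $D$. My first attempt would be a $2$-adic (parity) argument. Since $D\equiv5\pmod 8$, we have $\chi_D(2)=-1$ and $(D-b^2)/4$ is odd for odd $b$. The condition $k\equiv2\pmod4$ then controls the $2$-adic valuations of the Bernoulli-type constants $L(1-k,\chi_D)$ and $B_k/k$. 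The aim is to show that one term has strictly smaller $2$-adic valuation than all the others, so the total cannot vanish. If that fails, the fallback is to express the coefficient via the explicit formula for $a_{D,k,e}(1)$ in Definition \ref{def:fDlke} and carry out the same valuation comparison there.
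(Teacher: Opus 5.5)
\textbf{There is a genuine gap: Step 3 never proves the non-vanishing, and that non-vanishing is the whole content of the statement.}

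The paper gives no argument for this statement; it quotes it from Kohnen--Zagier. Your Step 1 is a correct reduction: applying their formula to $D$ and to $1$ shows the claim is equivalent to finding an eigenform $g\in S^+_{k+1/2}$ with $c(1)c(D)\neq0$. But the proposal stops exactly where the substance begins.

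Step 3 is a plan, not an argument. You never say which term should have strictly smaller $2$-adic valuation, or why. In fact the naive version of that mechanism already fails in the first case. Take the kernel you name first, with $k=6$, $D=5$, $\theta(z)=\sum_{b\in\mathbb{Z}}q^{b^2}$ and $G_6(z)=-\frac{B_6}{12}+\sum_{n\ge1}\sigma_5(n)q^n$.
\begin{itemize}
\item The cuspidal part of $\theta(z)G_6(4z)$ in $M^+_{13/2}$ is $\theta(z)G_6(4z)+\frac{65}{691}\sum_{N\ge0}H(6,N)q^N$.
\item Its $q^5$-coefficient is $2+\frac{65}{691}L(-5,\chi_5)=2-\frac{1742}{691}=-\frac{360}{691}$.
\item Its $q$-coefficient is $-\frac{3}{691}$, which gives the correct ratio $120$ for the form attached to $\Delta$.
\end{itemize}
Both contributions to the $q^5$-coefficient have $2$-adic valuation exactly $1$, so non-vanishing becomes visible only after cancellation. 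A congruence proof would therefore have to work modulo a higher power of $2$ and control the $D$-dependent sum over $b$. Nothing in your outline shows how the hypotheses $k\equiv2\pmod4$ and $D\equiv5\pmod8$ would achieve this.

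Step 2 also contains two concrete errors.
\begin{itemize}
\item \emph{The two weighted sums are not proportional.} By your Step 1 formula, $L(f,k)L(f,\chi_D,k)/\langle f,f\rangle$ is a constant times $\langle f,f\rangle\, c_g(1)^2c_g(D)^2/\langle g,g\rangle^2$. Its ratio to $c_g(1)c_g(D)/\langle g,g\rangle$ varies with $f$, so the two sums are different numbers. Either one would suffice, but you have to produce a kernel for the one you actually use.
\item \emph{The ``equivalent'' route through the trace construction cannot work.} By Proposition \ref{prop:rscentrallvalue}, pairing with $\mathcal{F}_{D,k,e}$ produces $L(f,2k+e-1)L(f,\chi_D,k+e)$. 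The first factor lies in the region of absolute convergence and never vanishes. So this kernel sees only the twisted central value and carries no information about $L(f,k+e)$. No coefficient computation for it can give simultaneous non-vanishing.
\end{itemize}
In addition, $e=0$ lies outside the construction. There $[\,\cdot,\cdot\,]_0$ is just the non-cuspidal product $G_{k,\chi_D}^2$.

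Your fallback via $a_{D,k,e}(1)$ has the same defect, and within this paper it is circular. The paper deduces $\mathcal{F}_{D,k,e}\not\equiv0$ from the present proposition. It also leaves the non-vanishing of such first coefficients open as Conjecture \ref{conj:nonvans=ishingof1stco}.
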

\begin{proposition}\label{prop:nonvanl=k}
    Let $k\geq3$ and $e>0$ be integers such that $e\equiv0\pmod2$ and $k+e\equiv2\pmod4$, and $D$ be a prime congruent to $5\pmod8$. Then $\mathcal{F}_{D,k,e}$ is not identically zero.
\end{proposition}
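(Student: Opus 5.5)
The argument mirrors Proposition \ref{prop:nonval<k}, using Proposition \ref{prop:rscentrallvalue} in place of Proposition \ref{prop:rsrc} and Proposition \ref{prop:kzcentralLspecialcase} to guarantee a nonzero twisted \emph{central} value. We argue by contradiction. Suppose $\mathcal{F}_{D,k,e}\equiv0$. Then $\langle f,\mathcal{F}_{D,k,e}\rangle_1=0$ for every normalized Hecke eigenform $f\in S_{2k+2e}$.

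First we check the hypotheses of Proposition \ref{prop:rscentrallvalue}. We have $e\equiv0\pmod 2$. Since $e>0$ is even, $e\geq2$, and $k+e\equiv2\pmod4$ with $k\geq3$ forces $k$ to be even, so $k\geq4$. For $D\equiv5\pmod8$ prime we have $D\equiv1\pmod4$, hence $\chi_D(-1)=1=(-1)^k$. Proposition \ref{prop:rscentrallvalue} then gives
\begin{align*}
\langle f,\mathcal{F}_{D,k,e}\rangle_1=C\cdot L(f,2k+e-1)\,L(f,\chi_D,k+e),
\end{align*}
where
\begin{align*}
C=\frac{\Gamma(2k+2e-1)\Gamma(k+e)L(1-k,\chi_D)}{2e!(4\pi)^{2k+2e-1}\Gamma(k)L(k,\chi_D)}.
\end{align*}

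Next we show each factor is nonzero.
\begin{itemize}
\item[(i)] The constant $C$ is nonzero because $L(1-k,\chi_D)\neq0$. Indeed, $\chi_D$ is even and $k$ is even, so by the functional equation $L(1-k,\chi_D)$ is a nonzero multiple of $L(k,\chi_D)\neq0$; equivalently, it equals $-B_{k,\chi_D}/k\neq0$. Also $L(k,\chi_D)$ is a convergent, nonzero Euler product.
\item[(ii)] The factor $L(f,2k+e-1)$ is nonzero. The weight is $K=2k+2e$, the center is $k+e$, and $2k+e-1\geq(k+e)+3>\frac{K+1}{2}$, so the point lies in the region of absolute convergence, where the Euler product does not vanish.
\end{itemize}

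The remaining factor $L(f,\chi_D,k+e)$ is the main obstacle: it is the central value for weight $2(k+e)$ and may vanish for some $f$. Apply Proposition \ref{prop:kzcentralLspecialcase} with $k+e$ in place of $k$. Since $k+e\equiv2\pmod4$ and $D$ is prime with $D\equiv5\pmod8$, there is a Hecke eigenform $f\in S_{2(k+e)}=S_{2k+2e}$ with $L(f,\chi_D,k+e)\neq0$. Note that $D$ here is a prime, so $\chi_D$ is the Legendre symbol, matching the setting of Kohnen--Zagier.

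For this $f$, $\langle f,\mathcal{F}_{D,k,e}\rangle_1\neq0$, which contradicts the assumption. Hence $\mathcal{F}_{D,k,e}$ is not identically zero.
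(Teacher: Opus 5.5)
Your proposal is correct and follows the same route as the paper. Assume $\mathcal{F}_{D,k,e}\equiv0$, use Proposition \ref{prop:rscentrallvalue} to force $L(f,\chi_D,k+e)=0$ for every normalized eigenform $f\in S_{2k+2e}$, and contradict Proposition \ref{prop:kzcentralLspecialcase} applied with $k+e$ in place of $k$. Your extra checks supply exactly the details the paper leaves implicit: $k$ even forces $k\geq4$ and $\chi_D(-1)=(-1)^k$, the constant is nonzero since $L(1-k,\chi_D)\neq0$, and $L(f,2k+e-1)\neq0$ by absolute convergence.
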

\begin{proof}
    Suppose $\mathcal{F}_{D,k,e}\equiv0$. Then $\langle f,\mathcal{F}_{D,k,e}\rangle_1=0$ for all normalized Hecke eigenforms $f$ in $S_{2k+2e}$, implying that $L(f,\chi_D,k+e)=0$ by Proposition \ref{prop:rscentrallvalue}, which contradicts Proposition \ref{prop:kzcentralLspecialcase}.
\end{proof}
\begin{proposition}\label{prop:k=12l=k}
    Let $D=1$ or a prime congruent to $5\pmod8$. Then
    \begin{align*}
        \Delta(z)=\tilde{\mathcal{F}}_{D,4,2}(z;\chi_D).
    \end{align*}
\end{proposition}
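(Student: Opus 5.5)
The plan is to mirror the proof of Proposition \ref{prop:k=12l<k}. First observe that $\mathcal{F}_{D,4,2}(z;\chi_D)=\Tr^D_1[G_{4,\chi_D},G_{4,\chi_D}]_2$. By Definition \ref{def:rankincohen}, the bracket is a cusp form of weight $4+4+2\cdot2=12$ and level $D$. Its nebentypus is $\chi_D^2$, which is trivial. Since the trace maps cusp forms of level $D$ to cusp forms of level $1$, we get $\mathcal{F}_{D,4,2}\in S_{12}$. Because $S_{12}=\mathbb{C}\Delta$, it follows that $\mathcal{F}_{D,4,2}=a_{D,4,4,2}(1;\chi_D)\,\Delta$. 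So the whole statement reduces to showing $\mathcal{F}_{D,4,2}\not\equiv 0$: then $a_{D,4,4,2}(1;\chi_D)\neq0$, and dividing by it gives $\Delta=\tilde{\mathcal{F}}_{D,4,2}$.

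For $D$ a prime congruent to $5\pmod 8$, I would apply Proposition \ref{prop:nonvanl=k} with $k=4$ and $e=2$. I need to check its hypotheses. We have $k\geq3$ and $e=2\equiv0\pmod2$. We have $k+e=6\equiv2\pmod4$. Also $\chi_D(-1)=1=(-1)^4$, since $D\equiv1\pmod4$ makes $\chi_D$ even, as required for $G_{4,\chi_D}$ to be defined. Underneath, this uses Proposition \ref{prop:rscentrallvalue}, where $k=4\geq4$, together with Kohnen--Zagier (Proposition \ref{prop:kzcentralLspecialcase}) applied with weight $2(k+e)=12$, i.e.\ $k+e=6\equiv2\pmod4$. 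The only eigenform in $S_{12}$ is $\Delta$, so that result gives $L(\Delta,\chi_D,6)\neq0$ and hence non-vanishing.

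For $D=1$, the character $\chi_D$ is trivial and the trace is the identity. So $\mathcal{F}_{1,4,2}=[G_4,G_4]_2$, where $G_4=\tfrac{1}{240}+\sum\sigma_3(n)q^n$. Here I would verify non-vanishing directly by computing the $q^1$ coefficient. The terms with a derivative landing on the constant term vanish except those with $r=0$ or $r=e$, where $q$-coefficient $n=1$ involves $\sigma_3(0)\sigma_3(1)$. This gives $a_{1,4,2}(1)=\tfrac{1}{240}\left(\binom{5}{2}+\binom{5}{2}\right)=\tfrac{1}{12}$, which matches the example formula $\tfrac{1}{12}n^2\sigma_3(n)$ at $n=1$ and is nonzero. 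Alternatively, Proposition \ref{prop:rscentrallvalue} with $D=1$ gives a multiple of $L(\Delta,12+1-1)L(\Delta,6)$. Both values are nonzero: the first lies in the region of absolute convergence, and for the second $\Lambda(\Delta,6)$ is a known nonzero critical value.

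There is essentially no obstacle beyond bookkeeping. The main care point is confirming that the trivial nebentypus and even-character conditions hold, so that $\mathcal{F}_{D,4,2}$ really lands in $S_{12}$.
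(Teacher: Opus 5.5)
Your proposal is correct and follows the paper's proof. For $D$ a prime $\equiv 5\pmod 8$ you invoke Proposition~\ref{prop:nonvanl=k} with $k=4$, $e=2$ together with $S_{12}=\mathbb{C}\Delta$. For $D=1$, which the paper simply calls trivial, your explicit value $a_{1,4,2}(1)=\frac{1}{12}\neq0$ supplies the omitted check. There is one slip, in your optional alternative for $D=1$: the first $L$-value should be $L(\Delta,2k+e-1)=L(\Delta,9)$, not $L(\Delta,12)$. This is harmless, since both lie in the region of absolute convergence.
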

    \begin{proof}
      The case $D=1$ is trivial.  The other case is immediate from Proposition \ref{prop:nonvanl=k} and the fact that $\Delta(z)$ is the unique normalized Hecke eigenform in $S_{12}$.
    \end{proof}

%\begin{align*}
%g_{D,3,5,2}(n;\chi)&=\hspace{-5pt}\sum_{D=D_1D_2}\overline{\chi}_2(-1)D_2^{-2}\sum_{m=0}^{nD_2}\sigma_{2,\chi_1,\overline{\chi}_2}(m)\sigma_{4,\overline{\chi}_1,\chi_2}(nD_2-m)c_{2,m,nD_2-m},\\
%c_{2,m,nD_2-m}&=\sum_{r=0}^2(-1)^rm^r(nD_2-m)^{2-r}\binom{4}{2-r}\binom{6}{r}\\&=45m^2-36D_2mn+6D_2^2n^2,
%\end{align*}
%which implies that $g_{D,3,5,2}(n;\chi)=$
%\begin{align*}\sum\limits_{D=D_1D_2}\overline{\chi}_2(-1)\sum\limits_{m=0}^{nD_2}\sigma_{2,\chi_1,\overline{\chi}_2}(m)\sigma_{4,\overline{\chi}_1,\chi_2}(nD_2-m)\left(\frac{45m^2}{D_2^2}-\frac{36mn}{D_2}+6n^2\right).
%\end{align*}

\section{Computation of the Fourier coefficients}\label{sect:Fourier}
In this section, we compute the Fourier coefficients $a_{D,\ell,k,e}(n;\chi)$. We introduce another type of Eisenstein series; see also \cite[p.193]{Kohnen-Zagier1981} and \cite[pp. 269-270]{MiyakeMFbook}. Let $D=D_1D_2$ such that $D_1>0$, and $\chi=\chi_1\chi_2$, where $\chi_1$ and $\chi_2$ are primitive Dirichlet characters mod $D_1$ and mod $D_2$, respectively. Define 
\begin{align*}
    G_{k,\chi_1,\chi_2}(z):&=\sum_{n\geq0}\sigma_{k-1,\chi_1,\chi_2}(n)q^n,\nonumber\\
    \sigma_{k-1,\chi_1,\chi_2}(n):&=\begin{cases}
        -L(1-k,\chi_1)L(0,\chi_2)&n=0,\\
\sum\limits_{\substack{d_1,d_2>0\\d_1d_2=n}}\chi_1(d_1)\chi_2(d_2)d_1^{k-1}&n\geq1.
    \end{cases}
    %\label{eq:defoftwistsumchi1chi2}
\end{align*}
The following lemma is useful for computing the trace; see also \cite[pp.273-275]{GZ-86}. 
\begin{lemma}[{\cite[Lemma 3.1]{twistedNagoya}}]\label{lem:FourierexpansionofGkD} Let $D=D_1D_2$ with $D_1>0$
and $\chi$ be a primitive Dirichlet character mod $D$ such that $\chi(-1)=(-1)^k$. If $\begin{psmallmatrix}
    a&b\\c&d
\end{psmallmatrix}$ is a matrix in $\Sl_2(\mathbb{Z})$ such that $\gcd(c,D)=D_1$ then
\begin{align}
    G_{k,\chi}(z)\bigg|_k\begin{pmatrix}
        a & b\\ c& d\end{pmatrix}=\chi_2(c)\chi_1(d)\overline{\chi}_{1}(D_2)\overline{\chi}_{2}(D_1)\frac{G(\overline{\chi}_{1})}{G(\overline{\chi})}G_{k,\chi_1,\overline{\chi}_{2}}\left(\frac{z+c^{\ast}d}{D_2}\right),
\end{align}
where $\chi=\chi_1\chi_2$ is the product of primitive characters $\chi_1$ mod $D_1$ and $\chi_2$ mod $D_2$, and $c^{\ast}$ is an integer with $cc^{\ast}\equiv 1\pmod{D_2}$ and $D_1\mid c^{\ast}$. 
\end{lemma}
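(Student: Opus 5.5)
The plan is to work with a full lattice-sum model of $G_{k,\chi}$, apply the slash operator directly to the lattice sum, and then reorganize the transformed sum via the Chinese remainder theorem modulo $D_1$ and $D_2$. By \eqref{eq:relationbetweentwoeisenstein} (with $\chi$ replaced by $\overline{\chi}$), $G_{k,\chi}$ is an explicit nonzero multiple of $E^{\ast}_{k,D}(z;\overline{\chi})$. Summing over all coprime pairs and multiplying by $L(k,\overline{\chi})$, I would instead write
\begin{align*}
G_{k,\chi}(z)=C_{k}(\chi)\sum_{(m,n)\neq(0,0)}\frac{\overline{\chi}(n)}{(mDz+n)^{k}},
\end{align*}
with $C_k(\chi)$ explicit, involving $D^k$, $(k-1)!$, $(-2\pi i)^{k}$ and $G(\overline{\chi})$. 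The parity condition $\chi(-1)=(-1)^k$ makes the $\pm$ identifications harmless.

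For $\gamma=\begin{psmallmatrix}a&b\\c&d\end{psmallmatrix}$ one has $(mDz+n)|\gamma$ corresponding to the row vector $(u,v)=(mD,n)\gamma$, together with the factor $(cz+d)$ absorbed by the slash. Hence
\begin{align*}
G_{k,\chi}|_k\gamma=C_k(\chi)\sum_{(u,v)}\frac{\overline{\chi}(va-ub)}{(uz+v)^{k}},
\end{align*}
where $(u,v)$ runs over $\mathbb{Z}^2\setminus\{0\}$ subject to $D\mid ud-vc$, since $(mD,n)=(u,v)\gamma^{-1}=(ud-vc,\,-ub+va)$. Now split modulo $D_1$ and $D_2$.
\begin{itemize}
\item Modulo $D_1$: we have $D_1\mid c$ and $d$ is a unit, so the condition becomes $D_1\mid u$, and $\overline{\chi}_1(va-ub)=\overline{\chi}_1(a)\overline{\chi}_1(v)=\chi_1(d)\overline{\chi}_1(v)$.
\item Modulo $D_2$: $c$ is a unit, so the condition becomes $v\equiv c^{\ast}du\pmod{D_2}$. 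Using $ad-bc=1$, we get $va-ub\equiv u c^{\ast}$, so $\overline{\chi}_2(va-ub)=\chi_2(c)\overline{\chi}_2(u)$.
\end{itemize}

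Next I parametrize $u=D_1u'$ and $v=c^{\ast}du+D_2w$. Because $D_1\mid c^{\ast}$ and $D_1\mid u$, we have $v\equiv D_2w\pmod{D_1}$, so $\overline{\chi}_1(v)=\overline{\chi}_1(D_2)\overline{\chi}_1(w)$ and $\overline{\chi}_2(u)=\overline{\chi}_2(D_1)\overline{\chi}_2(u')$. Moreover $uz+v=D_2\bigl(D_1u'z'+w\bigr)$ with $z'=(z+c^{\ast}d)/D_2$. This yields
\begin{align*}
G_{k,\chi}|_k\gamma=C_k(\chi)D_2^{-k}\chi_2(c)\chi_1(d)\overline{\chi}_1(D_2)\overline{\chi}_2(D_1)\sum_{(u',w)}\frac{\overline{\chi}_2(u')\overline{\chi}_1(w)}{(D_1u'z'+w)^{k}}.
\end{align*}
This already exhibits the character factors $\chi_2(c)\chi_1(d)\overline{\chi}_1(D_2)\overline{\chi}_2(D_1)$ claimed in the lemma.

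It remains to identify the final double sum with a constant multiple of $G_{k,\chi_1,\overline{\chi}_2}(z')$. I would do this by the standard computation: apply the Lipschitz formula to the inner sum over $w$, after splitting $w$ into residue classes mod $D_1$ and evaluating the Gauss sum of the primitive character $\overline{\chi}_1$. This gives $q$-coefficients $\sum_{d_1d_2=n}\chi_1(d_1)\overline{\chi}_2(d_2)d_1^{k-1}$ times $G(\overline{\chi}_1)(-2\pi i)^kD_1^{-k}/(k-1)!$. The $u'=0$ term, which contributes only when $D_2=1$, gives the constant term $L(k,\overline{\chi}_1)$, and by the functional equation this should match $-L(1-k,\chi_1)L(0,\overline{\chi}_2)$ after normalization.

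The main obstacle is this last bookkeeping of constants. The product of $C_k(\chi)$, $D_2^{-k}$, the Lipschitz factor and $G(\overline{\chi}_1)$ must collapse exactly to $G(\overline{\chi}_1)/G(\overline{\chi})$, using $D=D_1D_2$ and the multiplicativity $G(\overline{\chi})=\overline{\chi}_1(D_2)\overline{\chi}_2(D_1)G(\overline{\chi}_1)G(\overline{\chi}_2)$ (which may absorb or produce some of the character factors). The constant term also needs separate care via the functional equation.
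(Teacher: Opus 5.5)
Your proposal is correct. It is the standard direct lattice-sum computation behind the cited result; the paper gives no proof of its own, only the reference to \cite[Lemma 3.1]{twistedNagoya} (compare \cite[pp.~273--275]{GZ-86}). The transformation $(u,v)=(mD,n)\gamma$, the CRT split, and the substitution $u=D_1u'$, $v=c^{\ast}du+D_2w$ all check out.

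The step you call the main obstacle closes immediately. It should \emph{not} use multiplicativity of Gauss sums, because your substitution has already produced $\overline{\chi}_1(D_2)\overline{\chi}_2(D_1)$; using it would duplicate those factors. Concretely:
\begin{itemize}
\item From \eqref{eq:relationbetweentwoeisenstein} one has $C_k(\chi)=\frac{D^k(k-1)!}{2(-2\pi i)^kG(\overline{\chi})}$.
\item The terms with $u'\neq0$ give $2D_1^{-k}\frac{(-2\pi i)^k}{(k-1)!}G(\overline{\chi}_1)\sum_{N\geq1}\sigma_{k-1,\chi_1,\overline{\chi}_2}(N)e^{2\pi iNz'}$, since the $u'<0$ terms duplicate the $u'>0$ ones by parity.
\item Hence $C_k(\chi)D_2^{-k}\cdot2D_1^{-k}\frac{(-2\pi i)^k}{(k-1)!}G(\overline{\chi}_1)=\frac{G(\overline{\chi}_1)}{G(\overline{\chi})}$, because $D_1D_2=D$.
\end{itemize}

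For the constant term, the case $D_2=1$ is just the $\Gamma_0(D)$-transformation law, and your $u'=0$ term reproduces it. What you left unchecked is $D_2>1$. There your $u'=0$ term vanishes, so you also need $-L(1-k,\chi_1)L(0,\overline{\chi}_2)=0$. This is a parity fact, not the functional equation:
\begin{itemize}
\item if $\chi_2$ is even, then $L(0,\overline{\chi}_2)=0$, since $\chi_2$ is nontrivial;
\item if $\chi_2$ is odd, then $\chi_1(-1)=(-1)^{k+1}$, which forces $L(1-k,\chi_1)=0$.
\end{itemize}
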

For $m\geq1$ and $f(z)=\sum_{n\geq0}a_f(n)q^n\in M_{k}(N,\chi)$ we define the $U$-operator
\begin{align}
    U_mf(z)=\frac{1}{m}\sum_{v~{\rm mod}~m}f\left(\frac{z+v}{m}\right)=\sum_{n\geq0}a_f(mn)q^n.\label{eq:defofUmap}
\end{align}
Equivalently, we may write 
\begin{align*}U_mf(z)=m^{k/2-1}\sum_{v\text{ mod }m}f(z)\bigg|_k\begin{pmatrix}
    1 & v\\ 0& m
\end{pmatrix}.\label{eq:defofUslash}\end{align*} 
We are ready to compute $a_{D,\ell,k,e}(n;\chi)$. The special case $e=0$ and $k=\ell$ was computed by Kohnen-Zagier \cite[p.193]{Zagier1976}, and some other special cases have been calculated in \cite{Kayath_Lane_Neifeld_Ni_Xue_2025,twistedNagoya}. 
\begin{proposition}\label{prop:fourierco}
    Let $\mathcal{F}_{D,\ell,k,e}(z;\chi)$ be defined as in \eqref{eq:defoffancyG}. Then 
    \begin{align*}
        \mathcal{F}_{D,\ell,k,e}(z;\chi)=\sum_{D=D_1D_2}\overline{\chi}_2(-1)D_2^{-e}U_{D_2}\left([G_{\ell,\chi_1,\overline{\chi}_2}(z),G_{k,\overline{\chi}_1,\chi_2}(z)]_e\right).
    \end{align*}
   % where  the summation is over all factorizations of $D=D_1D_2$ as a product of two positive integers, and $\chi=\chi_1\chi_2$ is the product of primitive characters $\chi_1$ mod $D_1$ and $\chi_2$ mod $D_2$.
\end{proposition}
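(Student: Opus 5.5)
Fix $D$ and the primitive character $\chi$, and write $F=[G_{\ell,\chi},G_{k,\overline{\chi}}]_e\in S_{k+\ell+2e}(D)$. The plan is to compute the trace coset by coset, grouping cosets by $D_1=\gcd(c,D)$. I will use two facts:
\begin{enumerate}
\item Rankin--Cohen brackets commute with slashing: $[f,g]_e|_{a+b+2e}\gamma=[f|_a\gamma,\,g|_b\gamma]_e$ for every $\gamma\in\Sl_2(\mathbb{Z})$.
\item Brackets behave well under $z\mapsto (z+v)/m$. If $f_0(z)=f(z/m)$ and $g_0(z)=g(z/m)$, the chain rule gives $f_0^{(r)}(z)=m^{-r}f^{(r)}(z/m)$. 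Hence $[f_0,g_0]_e(z)=m^{-e}[f,g]_e((z+v)/m)$, and translation by $v$ commutes with the bracket.
\end{enumerate}

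\textbf{Coset representatives.} Since $D$ is square-free, $\Gamma_0(D)\backslash\Sl_2(\mathbb{Z})$ is in bijection with $\mathbb{P}^1(\mathbb{Z}/D)=\prod_{p\mid D}\mathbb{P}^1(\mathbb{F}_p)$. For each factorization $D=D_1D_2$ I will take the $D_2$ cosets with $\gcd(c,D)=D_1$. For these I choose representatives $\gamma_{D_1,v}=\begin{psmallmatrix}a&b\\c&d\end{psmallmatrix}$, normalized so that the quantities $c^*d$ run over all residues $v \bmod D_2$. Concretely, I will fix $c$ with $\gcd(c,D)=D_1$ and let $d$ vary over residues making the pair coprime. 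Then $-c^*d$, or $c^*d$, runs over $\mathbb{Z}/D_2$ as the coset varies. This count gives $\sum_{D_2\mid D}D_2=\prod_{p\mid D}(p+1)$, which is the correct index.

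\textbf{Applying Lemma \ref{lem:FourierexpansionofGkD}.} Apply the lemma to $G_{\ell,\chi}$ with factorization $\chi=\chi_1\chi_2$, and to $G_{k,\overline{\chi}}$ with $\overline{\chi}=\overline{\chi}_1\overline{\chi}_2$. The product of the two prefactors is
\[
\chi_2(c)\overline{\chi}_2(c)\,\chi_1(d)\overline{\chi}_1(d)\,\bigl|\chi_1(D_2)\chi_2(D_1)\bigr|^2\,\frac{G(\overline{\chi}_1)G(\chi_1)}{G(\overline{\chi})G(\chi)}.
\]
Using $G(\psi)G(\overline{\psi})=\psi(-1)N$ for primitive $\psi$ mod $N$, this becomes
\[
\frac{\chi_1(-1)D_1}{\chi(-1)D}=\overline{\chi}_2(-1)\,D_2^{-1}.
\]
Here $\chi_2(-1)=\pm1$, so $\chi_2(-1)=\overline{\chi}_2(-1)$. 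The factors $\chi_1(d)\overline{\chi}_1(d)$ and $\chi_2(c)\overline{\chi}_2(c)$ equal $1$, since $\gcd(d,D_1)=1$ and $\gcd(c,D_2)=1$. So each coset contributes
\[
\overline{\chi}_2(-1)D_2^{-1}\,[\,G_{\ell,\chi_1,\overline{\chi}_2}(\tfrac{z+v}{D_2}),\,G_{k,\overline{\chi}_1,\chi_2}(\tfrac{z+v}{D_2})\,]_e.
\]
By fact 2 this equals
\[
\overline{\chi}_2(-1)D_2^{-1-e}\,H\!\left(\tfrac{z+v}{D_2}\right),\qquad H=[G_{\ell,\chi_1,\overline{\chi}_2},G_{k,\overline{\chi}_1,\chi_2}]_e.
\]
Summing over $v\bmod D_2$ and using \eqref{eq:defofUmap} gives $\overline{\chi}_2(-1)D_2^{-e}U_{D_2}H$. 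Summing over factorizations then yields the claim.

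\textbf{The case $\ell=k$.} Here the bracket is $[G_{k,\chi_D},G_{k,\chi_D}]_e$. The same computation applies with $\chi=\overline{\chi}=\chi_D$ real, so $\chi_i=\overline{\chi}_i$ and the statement reads consistently.

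\textbf{Main obstacle.} The hard part is the bookkeeping of the coset representatives. One must ensure that, for fixed $D_1$, the shifts $c^*d \bmod D_2$ indeed run over a complete residue system exactly once. The Gauss sum constant $G(\overline{\chi}_1)/G(\overline{\chi})$ and its partner must also combine to precisely $\overline{\chi}_2(-1)/D_2$. If needed, I would use multiplicativity $G(\chi)=\chi_1(D_2)\chi_2(D_1)G(\chi_1)G(\chi_2)$ to double-check this constant.
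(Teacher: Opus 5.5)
Your proposal is correct and is essentially the paper's proof. The paper takes the explicit representatives $\begin{psmallmatrix}1&0\\D_1&1\end{psmallmatrix}\begin{psmallmatrix}1&\mu\\0&1\end{psmallmatrix}$, so the shift $c^*d$ is $\mu+D_1^*$, which runs over all residues mod $D_2$. It then commutes the slash with the bracket, applies Lemma \ref{lem:FourierexpansionofGkD} to both Eisenstein series to obtain the factor $\overline{\chi}_2(-1)D_2^{-1}$, and identifies the sum over shifts with $D_2^{1-e}U_{D_2}$ of the bracket by writing $U_{D_2}$ via slashing with $\begin{psmallmatrix}1&v\\0&D_2\end{psmallmatrix}$; that last step is equivalent to your chain-rule Fact 2. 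You also use $G(\chi)G(\overline{\chi})=\chi(-1)D$ explicitly, alongside the identity for $\chi_1$, which makes your bookkeeping of the constant slightly more complete than the paper's write-up.
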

\begin{proof}
    Consider the following system of representatives  of $\Gamma_0(D)\backslash \Sl_{2}(\mathbb{Z})$ (see \cite[p.~276]{GZ-86} and \cite[Lemma 3.1]{Kayath_Lane_Neifeld_Ni_Xue_2025}):
    \begin{equation*}
        \left\{\begin{pmatrix}
            1 & 0 \\ D_1 & 1
        \end{pmatrix}\begin{pmatrix}
            1 & \mu \\ 0 & 1
        \end{pmatrix}~:~ D=D_1D_2,~\mu \text{ mod }D_2\right\}.
    \end{equation*}
By Lemma \ref{lem:FourierexpansionofGkD} and the fact that the slash operator \eqref{eq:slash} commutes with the Rankin-Cohen bracket, we have $   \mathcal{F}_{D,\ell,k,e}(z;\chi)=$
\begin{align*}
 &\sum_{D_1D_2=D}\sum_{\mu \text{ mod } D_2} \left[G_{\ell,\chi}(z),G_{k,\overline{\chi}}(z)\right]_e\bigg|_{k+\ell+2e}\begin{pmatrix}
            1 & 0 \\ D_1 & 1
        \end{pmatrix}\begin{pmatrix}
            1 & \mu \\ 0 & 1
        \end{pmatrix} \\
        =&\sum_{D_1D_2=D}\sum_{\mu \text{ mod } D_2}\left[G_{\ell,\chi}(z)\bigg|_{\ell}\begin{pmatrix}
            1 & 0 \\ D_1 & 1
\end{pmatrix}\begin{pmatrix}
            1 & \mu \\ 0 & 1
        \end{pmatrix},G_{k,\overline{\chi}}(z)\bigg|_{k}\begin{pmatrix}
            1 & 0 \\ D_1 & 1
        \end{pmatrix}\begin{pmatrix}
            1 & \mu \\ 0 & 1
        \end{pmatrix}\right]_e \\
        =&\sum_{D_1D_2=D}\sum_{\mu \text{ mod } D_2}\hspace{-5pt}\overline{\chi}_2(-1)D_2^{-1}\left[G_{\ell,\chi_1,\overline{\chi}_{2}}\left(\frac{z+\mu+D_1^*}{D_2}\right),G_{k,\overline{\chi}_{1},\chi_2}\left(\frac{z+\mu+D_1^*}{D_2}\right)\right]_e,
\end{align*}
where we used $G(\chi_{1})G(\overline{\chi}_{1})=\chi_1(-1)D_1  %, G(\chi)G(\overline{\chi})=\chi(-1)D
$ in the last equality (see e.g., \cite[Corollary 2.1.47]{Cohenbook}). On the other hand, $U_{D_2}([G_{\ell,\chi_1,\overline{\chi}_{2}}(z),G_{k,\overline{\chi}_{1},\chi_2}(z)]_e)=$
    \begin{align*}
%&\sum_{v~\text{mod}~D_2}D_2^{(k+\ell+2e)/2-1}[G_{\ell,\chi_1,\overline{\chi}_{2}}(z),G_{k,\overline{\chi}_{1},\chi_2}(z)]\bigg|_{k+\ell+2e}\begin{pmatrix}
%            1 & v\\ 0 & D_2
%\end{pmatrix}\\=
&\sum_{v~\text{mod}~D_2}D_2^{(k+\ell+2e)/2-1}\left[G_{\ell,\chi_1,\overline{\chi}_{2}}(z)\bigg|_{\ell}\begin{pmatrix}
            1 & v\\ 0 & D_2
\end{pmatrix}, G_{k,\overline{\chi}_{1},\chi_2}(z)\bigg|_k\begin{pmatrix}1 & v\\ 0 & D_2
\end{pmatrix}\right]_e\\=&\sum_{v~\text{mod}~D_2}D_2^{(k+\ell+2e)/2-1}\left[D_2^{-\ell/2}G_{\ell,\chi_1,\overline{\chi}_{2}}\left(\frac{z+v}{D_2}\right), D_2^{-k/2}G_{k,\overline{\chi}_{1},\chi_2}\left(\frac{z+v}{D_2}\right)\right]_e\\=&\sum_{v~\text{mod}~D_2}D_2^{e}D_2^{-1}\left[G_{\ell,\chi_1,\overline{\chi}_{2}}\left(\frac{z+v}{D_2}\right),G_{k,\overline{\chi}_{1},\chi_2}\left(\frac{z+v}{D_2}\right)\right]_e,
    \end{align*}
    which gives the desired result.
\end{proof}
\begin{proposition}\label{prop:fouriercoexplicit}
    For $n\geq1$, we have 
    \begin{gather*}
        a_{D,\ell,k,e}(n;\chi)=\hspace{-5pt}\sum_{D=D_1D_2}\overline{\chi}_2(-1)D_2^{-e}\hspace{-5pt}\sum_{\substack{a_1,a_2\geq0\\a_1+a_2=nD_2}}\sigma_{\ell-1,\chi_1,\overline{\chi}_2}(a_1)\sigma_{k-1,\overline{\chi}_1,\chi_2}(a_2) \cdot c_{e,a_1,a_2},\\
        c_{e,a_1,a_2}:=\sum_{r=0}^e(-1)^ra_1^ra_2^{e-r}\binom{e+\ell-1}{e-r}\binom{e+k-1}{r}.
    \end{gather*}
\end{proposition}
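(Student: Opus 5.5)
The plan is to read off the $q^n$-coefficient directly from the expression for $\mathcal{F}_{D,\ell,k,e}(z;\chi)$ given in Proposition \ref{prop:fourierco}. Each summand there is $\overline{\chi}_2(-1)D_2^{-e}U_{D_2}$ applied to a Rankin--Cohen bracket. By \eqref{eq:defofUmap}, the $q^n$-coefficient of $U_{D_2}h$ is the $q^{nD_2}$-coefficient of $h$. So it suffices to compute, for each factorization $D=D_1D_2$, the $q^m$-coefficient of
\[
h:=[G_{\ell,\chi_1,\overline{\chi}_2},G_{k,\overline{\chi}_1,\chi_2}]_e ,
\]
and then set $m=nD_2$.

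For this, write $G_{\ell,\chi_1,\overline{\chi}_2}(z)=\sum_{a_1\ge0}\sigma_{\ell-1,\chi_1,\overline{\chi}_2}(a_1)q^{a_1}$ and similarly $G_{k,\overline{\chi}_1,\chi_2}(z)=\sum_{a_2\ge 0}\sigma_{k-1,\overline{\chi}_1,\chi_2}(a_2)q^{a_2}$. The normalized derivative $\frac{1}{2\pi i}\frac{d}{dz}$ acts on $q^{a}$ as multiplication by $a$, so
\[
G^{(r)}=\sum_{a_1} a_1^r\sigma(a_1)q^{a_1}, \qquad G^{(e-r)}=\sum_{a_2} a_2^{e-r}\sigma(a_2)q^{a_2}.
\]
This holds including the constant terms: for $r\ge1$ the constant term is killed, and this is consistent with the convention $a_1^r=0$ for $a_1=0$, $r\geq1$, while $0^0=1$. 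Multiplying the two series and inserting the binomial weights from \eqref{eq:defofrankin-cohenbracket} gives the $q^m$-coefficient of $h$ as
\[
\sum_{a_1+a_2=m}\sigma_{\ell-1,\chi_1,\overline{\chi}_2}(a_1)\,\sigma_{k-1,\overline{\chi}_1,\chi_2}(a_2)\,c_{e,a_1,a_2}.
\]
The step of interchanging the sum over $r$ with the Cauchy product is legitimate because everything is a finite sum for fixed $m$.

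Substituting $m=nD_2$ and summing over factorizations with the factor $\overline{\chi}_2(-1)D_2^{-e}$ then yields the claimed formula. The case $\ell=k$, where $\mathcal{F}_{D,k,e}$ uses $\chi_D$ twice, is covered by the same computation: Proposition \ref{prop:fourierco} applies verbatim with $\chi=\chi_D=\overline{\chi}_D$, since Definition \ref{def:construction} is consistent with $G_{k,\overline{\chi}}=G_{k,\chi_D}$.

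The only point needing care is the bookkeeping of the constant terms $\sigma(0)=-L(1-l,\phi)L(0,\psi)$. I would check that these agree with the constant terms of the Eisenstein series $G_{k,\chi_1,\chi_2}$ appearing in Lemma \ref{lem:FourierexpansionofGkD}, which holds by definition. In particular, $L(0,\psi)=0$ for even nontrivial $\psi$ and $L(0,\mathbf{1})=-1/2$. I would also verify the convention $0^0=1$ in $c_{e,a_1,a_2}$. No analytic obstacle is expected: the main work was already done in Proposition \ref{prop:fourierco}, and this statement is a formal coefficient extraction.
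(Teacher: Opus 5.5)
Your proposal is correct and follows the paper's own proof. Like the paper, you read off the $q^{nD_2}$-coefficient of each Rankin--Cohen bracket through $U_{D_2}$ in Proposition~\ref{prop:fourierco}, use that the normalized derivative multiplies the $q^{a}$-coefficient by $a$ (with $0^0=1$), and expand the Cauchy product with the binomial weights; you also correctly write $\sigma_{k-1,\overline{\chi}_1,\chi_2}(a_2)$ for the second factor, where the paper's displayed computation has a typo with $\ell-1$.
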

\begin{proof}
By Proposition \ref{prop:fourierco} and \eqref{eq:defofUmap}, we have
\begin{align*}
    g(n)=\sum_{D=D_1D_2}\overline{\chi}_2(-1)D_2^{-e}h(nD_2),
\end{align*}
where $h(n)$ denotes the $q^{n}$-coefficient of $[G_{\ell,\chi_1,\overline{\chi}_2}(z),G_{k,\overline{\chi}_1,\chi_2}(z)]_e$. Note that
\begin{align*}
    G_{\ell,\chi_1,\overline{\chi}_2}(z)^{(r)}=\sum_{n=0}^{\infty}n^r\sigma_{\ell-1,\chi_1,\overline{\chi}_2}(n)q^n.
\end{align*}
Thus, by \eqref{eq:defofrankin-cohenbracket}, we get $h(nD_2)=$
\begin{align*}
    &\sum_{r=0}^e(-1)^r\binom{e+\ell-1}{e-r}\binom{e+k-1}{r}\hspace{-7.5pt}\sum_{\substack{a_1,a_2\geq0\\a_1+a_2=nD_2}}\hspace{-7.5pt}a_1^r\sigma_{\ell-1,\chi_1,\overline{\chi}_2}(a_1)\cdot a_2^{e-r}\sigma_{\ell-1,\overline{\chi}_1,\chi_2}(a_2)\\=&\sum_{\substack{a_1,a_2\geq0\\a_1+a_2=nD_2}}\sigma_{\ell-1,\chi_1,\overline{\chi}_2}(a_1)\sigma_{\ell-1,\overline{\chi}_1,\chi_2}(a_2) \cdot c_{e,a_1,a_2},   
\end{align*}
giving the desired result.
\end{proof} 
Now, Theorem \ref{thm:mainresultodd} follows immediately from Propositions \ref{prop:k=12l<k}, \ref{prop:k=12l=k} and \ref{prop:fouriercoexplicit}.
\section{Discussion}
First, we would like to mention that one can construct more identities that involve $\tau(n)$ by applying Propositions \ref{prop:nonval<k} and \ref{prop:nonvanl=k} to the cases when $S_{k+\ell+2e}$ has dimension $1$, where $\mathcal{F}_{D,\ell,k,e}$ must be a Hecke eigenform. It is also   possible to obtain more identities by considering Eisenstein series of weight $2$ of higher levels. 

We make some remarks about the construction of $\mathcal{F}_{D,\ell,k,e}$. A natural question is whether we can show that $\mathcal{F}_{D,\ell,k,e}$ is not identically zero without using the Rankin-Selberg method. For example, one can try to show that at least one Fourier coefficient of $\mathcal{F}_{D,\ell,k,e}$ is non-zero. Unfortunately, even the non-vanishing of $a_{p,3,7,1}(1;\chi)$ (see \eqref{eq:1stcop371})  doesn't seem obvious to the author. However, one can approach this question by considering the scenario when the weight $K=k+\ell+2e$ is large. In \cite{twistedNagoya}, the authors considered the case where $e=0$ and $k=\ell$ (the definition of $\mathcal{F}_{D,k,\ell,e}$ needs to be modified), and showed that if $K\geq 10D + 2$, then its first Fourier coefficient is non-zero; see \cite[Proposition 5.6]{twistedNagoya}. It is therefore reasonable to propose the following conjecture.
\begin{conjecture}\label{conj:nonvans=ishingof1stco}
    Let $K=k+\ell+2e$. Then $a_{D,\ell,k,e}(1;\chi)$ is not zero for $K\gg_D1$.
\end{conjecture}

  Finally, note that $\mathcal{F}_{D,\ell,k,e}$ is not a Hecke eigenform in general when $S_{k+\ell+2e}$ has dimension greater than $1$, and the case $\ell<k$  can be proved by 
using the non-vanishing of twisted $L$-values as in Proposition \ref{prop:nonval<k}.
%\begin{proposition}
%    Suppose $\ell<k$. If $\dim S_{k+\ell+2e}>1$ then $\mathcal{F}_{D,\ell,k,e}(z;\chi)$ is not a Hecke eigenform.
%\end{proposition}
The case $\ell=k$ is trickier since we don't know the non-vanishing of twisted central $L$-value $L(f,\chi_D,k+e)$. One special case, $D=1$ and $e=0$ (where we need to modify the definition to the cuspidal projection of Eisenstein series), has been considered in \cite{cuspidal2021}; see \cite[Theorem 4.2]{cuspidal2021}. We propose the following conjecture.
\begin{conjecture}
    Suppose $\dim S_{2k+2e}>1$. Then $\mathcal{F}_{D,k,e}$ is not a Hecke eigenform.
\end{conjecture}
\section*{Acknowledgements}The author thank the anonymous referee for the detailed comments that have improved the exposition of this article.
The author also thanks Hui Xue for the discussion and Erick Ross for kindly providing numerical data.
\bibliographystyle{plain}
  
  \vspace{.2in}

\providecommand{\bysame}{\leavevmode\hbox
to3em{\hrulefill}\thinspace}

\bibliography{ref}

\end{document}